\documentclass[12pt]{article}

\usepackage{amsmath, amsthm, amssymb}
\usepackage{geometry}
\usepackage{hyperref}
\usepackage{enumitem}

\newtheorem{theorem}{Theorem}[section]
\newtheorem{lemma}[theorem]{Lemma}
\newtheorem{corollary}[theorem]{Corollary}
\newtheorem{proposition}[theorem]{Proposition}
\theoremstyle{definition}
\newtheorem{definition}[theorem]{Definition}
\newtheorem{example}[theorem]{Example}
\newtheorem{problem}[theorem]{Open Problem}
\theoremstyle{remark}
\newtheorem{remark}[theorem]{Remark}

\newcommand{\PP}{\mathbb{P}}
\newcommand{\PPo}{\mathbb{P}_0}
\newcommand{\NN}{\mathbb{N}}
\newcommand{\bX}{\beta X}
\newcommand{\Xs}{X^{*}}
\newcommand{\fu}{\mathfrak{u}}
\newcommand{\fv}{\mathfrak{v}}
\newcommand{\fw}{\mathfrak{w}}

\newcommand{\fp}{\mathfrak{p}}
\newcommand{\fq}{\mathfrak{q}}
\newcommand{\fk}{\mathfrak{k}}
\newcommand{\fF}{\mathfrak{F}}
\newcommand{\fD}{\mathfrak{D}}
\newcommand{\calK}{\mathcal{K}}
\newcommand{\calP}{\mathcal{P}}

\newcommand{\Del}{\Delta}

\newcommand{\sing}{\mathrm{sing}}
\DeclareMathOperator{\card}{\#}
\newcommand{\pf}{parfilter}
\newcommand{\upf}{parultrafilter}
\newcommand{\Upf}{Parultrafilter}

\renewcommand{\hat}{\widehat} %%I dislike tiny hats.
\title{Maximal Filters in the Lattice of Partitions of an Infinite Set}
\author{David V.\ Feldman \\
  {\small University of New Hampshire, Durham} \\[4pt]
  Alexander Wilce \\
  {\small Susquehanna University}}
\date{August 2, 2026}

\begin{document}

\maketitle

\begin{abstract}
We study maximal (proper) filters in the complete lattice of partitions of an infinite set~$X$.  In the language of uniform spaces, these are precisely the atoms of the lattice of zero-dimensional uniformities on $X$, introduced by Pelant and Reiterman and studied further by Pelant,
Reiterman, R\"odl and Simon. The first half of this paper recovers, sharpens, and extends their classification in purely partition-theoretic terms.  {Call a maximal filter 
of partitions {\em type I} if it does not contain all finite partitions, and {\em type II} if it does.} 

Type I filters are induced, in an essentially unique way, by ultrafilters on families of pairwise disjoint
doubletons.  

{A type II filter} determines a non-principal ultrafilter on $X$, the \emph{heart}; the heart determines the filter precisely when {the former} is minimal in the Rudin--Keisler order.  Each member $F$ of {a type II filter} 
 gives rise to a closed \emph{fiber} in $\Xs = \bX \setminus X$, {consisting of} the set of ultrafilters agreeing with the heart on $F$. We prove a trichotomy describing the topology of arbitrary fibers.  We then show that the fibers do not encode the filter: fibers do not form a semilattice under intersection, the closure of an infinite discrete set of ultrafilters of a single Rudin--Keisler type (a \emph{sparse} set, in our terminology)  need not be a fiber, and a partition incompatible with a member of the filter may have a {fiber strictly larger than that member.}
%strictly larger fiber than that member. 
A representation that does succeed is nevertheless available in another category: the 
 
{type-II filters} with heart $\fu$ correspond to
the maximal proper substructures of the ultrapower $X^X\!/\fu$ of the full structure on $X$.  The topological representation problem remains open. 
\end{abstract}

% ------------------------------------------------------------------
\section{Introduction}
% ------------------------------------------------------------------

If $X$ is a non-empty set, denote by $\PP(X)$ the set of partitions of $X$ by non-empty subsets.  We say that a partition $E \in \PP(X)$ \textbf{refines}
another partition $F$ (or that $F$ is a \textbf{coarsening} of~$E$), writing $E \leq F$, iff every cell of $F$ is the union of cells of $E$.  Ordered by refinement, $\PP(X)$ is a complete lattice with minimal element $\Del = \bigl\{\{x\} \mid x \in X\bigr\}$ and maximal element $\{X\}$. In this paper we describe the maximal proper filters on this lattice.

If $L$ is any bounded lattice, a \textbf{(proper) filter} in $L$ is a non-empty proper subset $\fF \subseteq L$ such that
$a, b \in \fF \Rightarrow a \wedge b \in \fF$ and
$a \geq b \in \fF \Rightarrow a \in \fF$.
By Zorn's Lemma, proper filters may be extended to maximal proper filters. Maximal proper filters in $\PP(X)$ we call \upf{}s, and filters in $\PP(X)$ generally we call \pf{}s.  (Filters {and ultrafilters} of subsets of $X$, of closed sets, and so on, remain simply \emph{filters} {and \emph{ultrafilters}}.)
The \textbf{principal} \pf{} generated by $F \in \PP(X)$ is
$\{E \mid E \geq F\}$; this is maximal iff $F$ is an atom of~$\PP(X)$.
The atoms of $\PP(X)$ are the partitions containing a
single $2$-element cell and only singletons otherwise.  We are mainly interested in non-principal \upf{}s.

It is useful to note that a filter $\fF$ in a bounded lattice $L$ is maximal iff for every $p \in L$,
\[
  p \notin \fF \;\Longrightarrow\; p \wedge q = 0_L
  \quad\text{for some } q \in \fF,
\]
where $0_L$ is the least element of $L$.  (Given the displayed condition, any proper filter strictly containing $\fF$ contains some $p \notin \fF$ together with a $q \in \fF$ satisfying $p \wedge q = 0_L$, hence contains $0_L$; conversely if $\fF$ is maximal and $p \notin \fF$, the collection
$\{x \mid x \geq p \wedge q \text{ for some } q \in \fF\}$ is a filter properly containing $\fF$, hence improper, whence $p \wedge q = 0_L$ for some $q \in \fF$.)

In the special case $L = \calP(X)$ (the power set of~$X$), maximal filters are the usual \emph{ultrafilters on~$X$}.  We denote ultrafilters on sets by small Gothic letters $\fu$, $\fv$, etc., 
{and \pf{}s by capital gothic letters.  
We write $\beta X$ for the set of all ultrafilters 
on $X$ and $X^{\ast}$ for the set of non-principal 
ultrafilters.}  Subsets of $X$ are denoted by small Roman letters $a, b, \ldots$, while large Roman letters $E, F, \ldots$ and $A, B, \ldots$ denote partitions and subsets of partitions of~$X$.  We use the axiom of choice freely and without comment.

If $X$ is infinite, the collection $\PPo(X)$ of partitions of $X$ having only finitely many cells is a proper \pf.  Every \upf{} extending $\PPo(X)$ is non-principal because 
{any given atom can be split by 2-cell partitions.}
%many 2-cell partitions will split any given atom.

We call an \upf{} that does not extend $\PPo(X)$ \textbf{type~I} and
those that do, \textbf{type~II}.  ({Thus,} principal \upf{}s fall into type~I.)

This subject turns out to have had a previous life in the theory of uniform spaces, a fact we learned only after the results below were found.  A uniformity on $X$ {(understood here as a set of uniform covers)} is \emph{zero-dimensional} if it has a basis of covers 
that are partitions.  Since a partition star-refines itself and the meet of two partition covers is their common refinement, the map sending a proper \pf{} to the uniformity it generates 
is an order isomorphism between proper \pf{}s and non-discrete zero-dimensional uniformities on~$X$.  Under this dictionary, \upf{}s are exactly the \emph{atoms} of the lattice of zero-dimensional uniformities in the sense of Pelant--Reiterman
\cite{pelant-reiterman} and Pelant--Reiterman--R\"odl--Simon
\cite{prrs1,prrs2}: the maximal  non-discrete members.  Our type-I \upf{}s
are their \emph{proximally non-discrete} atoms (type II being the proximally discrete case: a type-II \upf{} contains all finite partitions and so induces the discrete proximity ), and Theorem~\ref{thm:typeI} below is, in partition-lattice form, their one-to-one correspondence between such atoms and ultrafilters --- the \emph{ultrasum} construction of
\cite{pelant-reiterman} --- sharpened here by an essential-uniqueness remark.  The heart (Theorem~\ref{thm:heart}) appears as \cite[Proposition~2.2]{prrs1}.  Theorem~\ref{thm:minimal} is, for sets of non-measurable cardinality, equivalent by Booth's theorem \cite{booth} to their Proposition~2.3 (the filter uniformity of  $\fu$ is an atom iff
$\fu$ is selective); the Rudin--Keisler formulation given here requires no cardinality hypothesis.  The material from Section~\ref{sec:fibers} onward --- the fiber trichotomy, the negative results on the representation problem, and the ultrapower representation --- appears to be new, though the last was anticipated in a remark of the referee of
\cite{prrs2} (see Section~\ref{sec:ultrapower}).  A different partition-semilattice ultrafilter theory, oriented by  coarsening and with all blocks required {to be} infinite, was developed by Matet \cite{matet} and by Halbeisen--L\"owe \cite{halbeisen-loewe} in the wake of the dual Ramsey theorem; the classification questions answered here are posed, in dualized form, in the problem list of \cite{halbeisen-loewe}.  For the
continuation of the atoms program to quasi-uniformities see de
Jager--K\"unzi \cite{dejager-kunzi}.

Section~\ref{sec:typeI} describes the type-I \upf{}s completely: they are exactly the \pf{}s $\fD_\fu$ induced by ultrafilters $\fu$ on families of pairwise disjoint doubletons, and the inducing pair  is essentially unique.

Section~\ref{sec:typeII-RK} shows that every type-II \upf{} determines a certain non-principal ultrafilter on the underlying set~$X$, called its \textbf{heart}.  An \upf{} is determined by its heart if and only if the heart has minimal type in the Rudin--Keisler order.  Two combinatorial lemmas proved there --- a splitting lemma and a coloring lemma --- do most of the work in the rest of the paper.

Section~\ref{sec:fibers} studies, for a partition $F$ and a non-principal  ultrafilter $\fu$ on $X$, the closed \emph{fiber} $f^*(\fu) \subseteq \Xs$
of ultrafilters agreeing with $\fu$ on $F$, and proves a trichotomy: the fiber is a copy of the growth of a single cell, or a finite set computed by an ultraproduct of cells, or the closure of an infinite discrete set of Rudin--Keisler-least points --- this last case obtaining for every partition 
in 
a type-II \upf.  We also show that the closure of an infinite discrete set of ultrafilters of a single Rudin--Keisler type (a \emph{sparse} set, in our terminology) need \emph{not} be a fiber, and pose the intrinsic characterization of fibers as an open problem.

Section~\ref{sec:representation} concerns the natural hope that a type-II \upf{} $\fF$ with heart $\fu$ can be recovered from the family $\calK(\fF) = \{f^*(\fu) \mid F \in \fF\}$ of fibers of its members, so that type-II \upf{}s would correspond to suitable maximal filters of closed sets in $\Xs$.  We show that this fails, twice over: the identity $f^*(\fu) \cap g^*(\fu) = (f \wedge g)^*(\fu)$, on which the earlier
argument rested, is false (Proposition~\ref{prop:meet-vs-cap}); and, more seriously, there exist (in ZFC)  a type-II \upf{} $\fF$ with heart $\fu$, a 
{partition} %member 
$F \in \fF$, and a partition $G$ with $F \wedge G = \Del$ --- so that $G$ is incompatible with $\fF$ --- whose fiber satisfies $g^*(\fu) \supsetneq f^*(\fu)$ (Theorem~\ref{thm:absorption-fails}).  Thus
comparison of fibers cannot detect membership in $\fF$.

Section~\ref{sec:ultrapower} gives %% the 
{a logical} representation {of type-II filters} that does work. Regarding $X$ as a first-order structure with a name for every finitary operation 
{on $X$ [?]} , we show that the type-II \upf{}s with heart $\fu$ correspond bijectively to the maximal proper substructures of the ultrapower $X^X\!/\fu$; the \pf{} $\langle\hat\fu\rangle$  corresponds to the diagonal copy of $X$, so that Theorem~\ref{thm:minimal} takes the form: the diagonal is a maximal proper substructure of $X^X\!/\fu$ if and only if $\fu$ is minimal in the Rudin--Keisler order.  What \emph{topological} data over $\Xs$ determines a type-II \upf{} we leave open.

% ------------------------------------------------------------------
\section{Type-I \Upf{}s}
\label{sec:typeI}
% ------------------------------------------------------------------

Henceforth $X$ is an infinite set.  If $E$ is a partition of $X$ and $a \subseteq X$, we say that $a$ is a \textbf{section} (resp. \textbf{sub-section}) of $E$ iff $a$ intersects each cell of $E$ in exactly
(resp.\ at most) one point.  Thus $E \wedge F = \Del$ iff every cell of {$F$ is a subsection of $E$ 
(and vice versa).} 
%$E$ is a sub-section of $F$.

Let $D$ be a non-empty collection of pairwise disjoint $2$-element subsets of~$X$.  To each subset $A \subseteq D$ we associate a partition
\[
  D_A \;=\; A \;\cup\; \bigl\{\{x\} \mid x \notin \textstyle\bigcup A\bigr\},
\]
and to each partition $F$ of $X$ the set
\[
  \Sigma_F \;=\; \bigl\{\, d \in D \mid d \text{ is contained in a cell of } F \,\bigr\}
\]
of doubletons that $F$ fails to split.  Since $F \geq D_A$ iff
$A \subseteq \Sigma_F$, and since filters are upward closed, an ultrafilter
$\fu$ on the set $D$ yields {the collection 
of partitions} 
\[
  \fD_\fu \;=\;
  \bigl\{\, F \in \PP(X) \mid \exists\, A \in \fu,\; F \geq D_A \,\bigr\}
  \;=\;
  \bigl\{\, F \in \PP(X) \mid \Sigma_F \in \fu \,\bigr\}.
\]
 Notice that {$A \subseteq B \subseteq D$ implies $D_{A} \leq D_{B}$; 
also }
$D_A \wedge D_B = D_{A \cap B}$ for all $A, B \subseteq D$. Thus, the collection $\{D_A \mid A \in \fu\}$ is closed under finite meets and 
does not contain $\Del$; hence $\fD_\fu$ is a proper \pf. Note that 
$\fD_\fu$ is principal iff $\fu$ is principal (in which case $\fD_\fu$ is the principal \pf{} of the atom $D_{\{d\}}$, $d$ being the doubleton generating $\fu$).
{Finally, since $D_{A}$ refines itself, 
$D_{A} \in \fD_{\fu}$ for all $A \in \fu$, and 
conversely, if $D_{A} \geq D_{B}$ for some 
$B \in \fu$, then $A \supseteq B$ and thus 
$A \in \fu$.}

\begin{theorem}[cf.\ {\cite[Section~2.1]{prrs1}}]
\label{thm:typeI}
If $\fu$ is an ultrafilter on a non-empty collection $D$ of pairwise  disjoint doubletons, then $\fD_\fu$ is a type-I \upf.
Every type-I \upf{} is of this form.
\end{theorem}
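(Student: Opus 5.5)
We prove the two assertions separately, using throughout the maximality criterion from the introduction: a proper \pf{} $\fF$ is maximal iff every $P \notin \fF$ satisfies $P \wedge Q = \Del$ for some $Q \in \fF$.

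\textbf{$\fD_\fu$ is a type-I \upf.} Let $P \notin \fD_\fu$. Then $\Sigma_P \notin \fu$, so $A = D \setminus \Sigma_P \in \fu$. Every doubleton in $A$ is split by $P$, hence $P \wedge D_A = \Del$. Since $D_A \in \fD_\fu$, the criterion gives maximality. For type~I, let $a$ pick one point from each $d \in D$ and consider the two-cell partition $\{a, X\setminus a\}$ (take the single cell $\{X\}$ if $a = X$, which cannot happen since the doubletons are disjoint). It splits every $d \in D$, so its $\Sigma$ is empty and it does not lie in $\fD_\fu$. Thus $\fD_\fu$ misses a finite partition.

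\textbf{Every type-I \upf{} $\fF$ has this form.} Fix a finite partition $P \notin \fF$. By maximality there is $Q \in \fF$ with $P \wedge Q = \Del$. Each cell of $Q$ is then a subsection of $P$, so it has at most $n$ points, where $n$ is the number of cells of $P$. Now replace $Q$ by a finer member of $\fF$ whose non-singleton cells are doubletons.
\begin{itemize}
\item Try $Q_k$: shrink each cell $c$ of $Q$ to a $k$-point subset, or split it into pieces of size at most $k$.
\item A cleaner route is induction on cell size. If every cell of $Q$ has size $\le m$ with $m \ge 3$, choose one point $x_c$ from each cell $c$ of size $m$. Let $R$ be the two-cell partition $\{\{x_c\}_c, \text{rest}\}$.
\item If $R \in \fF$, then $Q \wedge R \in \fF$ and its cells of size $m$ drop to size $m-1$.
\item If $R \notin \fF$, some $S \in \fF$ satisfies $R \wedge S = \Del$. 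Then $Q \wedge S$ has cells meeting $\{x_c\}$ and the rest in at most one point each, so its cells have size at most $2$.
\end{itemize}
After finitely many steps we reach $Q' \in \fF$ with all cells of size $\le 2$. Let $D$ be the set of its doubleton cells. Then $Q' = D_D$, and $D \neq \emptyset$ since $\Del \notin \fF$.

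Set $\fu = \{A \subseteq D \mid D_A \in \fF\}$. By $D_A \wedge D_B = D_{A\cap B}$ and monotonicity, $\fu$ is a filter, and $\emptyset \notin \fu$ because $D_\emptyset = \Del$. It is an ultrafilter: if $A \notin \fu$, some $S \in \fF$ has $S \wedge D_A = \Del$. Then $S \wedge D_D$ lies in $\fF$, refines $D_D$, and has no doubleton from $A$, so it is at most $D_{D\setminus A}$, giving $D\setminus A \in \fu$. Finally, for $F \in \fF$ we have $F \wedge D_D = D_{\Sigma_F}$, so $\Sigma_F \in \fu$. Hence $\fF \subseteq \fD_\fu$, and maximality of $\fF$ gives equality.

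The main obstacle is the reduction step that produces a member of $\fF$ with cells of size $\le 2$. The case split on $R$ handles it, since the criterion either lowers the maximal cell size or collapses everything to doubletons immediately.
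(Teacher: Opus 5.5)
Your proof is correct and follows the paper's route. The forward direction uses the maximality criterion with $D_{D\setminus\Sigma_P}$ and a section $a$ of $D$ for type I. For the converse, a member of $\fF$ with all cells of size at most $2$ carries the ultrafilter $\{A \mid D_A \in \fF\}$ on its doubletons, which is the same filter the paper generates from the sets $\phi(F)$. The only difference is your induction on cell size, which the paper skips: some two-cell partition $\{a, a^c\}$ must lie outside $\fF$, since every finite partition is a meet of two-cell ones, and this is exactly your ``$R \notin \fF$'' branch applied at the outset.
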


\begin{proof}
Let $\fu$ be an ultrafilter on $D$.  
%% For maximality we verify the criterion of Section~1. 
Suppose $F \notin \fD_\fu$, i.e.\
$\Sigma_F \notin \fu$.  Then $B := D \setminus \Sigma_F \in \fu$, and 
$F \wedge D_B = \Del$: each doubleton in $B$ is split by $F$, hence is a
sub-section of $F$, and the singleton cells of $D_B$ are trivially
sub-sections.  Thus $\fD_\fu$ is maximal.  To see that $\fD_\fu$ is of
type~I, choose a section $a$ of $D$ and note that $\{a, a^c\} \wedge D_A = \Del$ for every $A \subseteq D$, so the finite partition $\{a, a^c\}$ does not belong to $\fD_\fu$.

Conversely, suppose $\fF$ is a type-I \upf.  If a \pf{}
contains every two-element partition $\{a, a^c\}$ of $X$, then it contains
all finite partitions.  Thus there exist $a \subseteq X$ and $E \in \fF$ such
that $E \wedge \{a, a^c\} = \Del$.  Then no cell of $E$ has more than two
elements.  Let $D \subseteq E$ be the set of $E$'s non-singleton cells.

{Now, for any partition $F$ of $X$, 
let $\phi(F) = (F \wedge E ) \cap D$. That is, $\phi(F)$ is the set of doubletons of $E$ contained in a cell of $F$. If $F \in \fF$, then as 
$F \cap E \not = \Delta$, such doubletons exist, 
and so $\phi(F) \not = \emptyset$. It is easy to check 
that $\phi(F_1 \wedge F_2) = \phi(F_1) \cap \phi(F_2)$, 
so the image of $\fF$ under $\phi$ is the base for a filter, $\fu$, on $D$. We claim that $\fu$ is an 
ultrafilter on $D$. For any $A \subseteq D$ and $F \in \fF$, we have $D_{A} \wedge F = \Delta$ iff $A \cap \phi(F) = \emptyset$. {Now suppose $A \subseteq D$
with $A \notin \fu$.  Then $D_A \notin \fF$: for $\phi(D_A) = A$ (the
doubletons of $E$ contained in a cell of $D_A$ are exactly those of $A$),
so $D_A \in \fF$ would put $A$ in the base of $\fu$.  By the maximality
criterion of Section~1 there is then some $F \in \fF$ with
$D_A \wedge F = \Delta$, i.e., $A \cap \phi(F) = \emptyset$.} Similarly, if 
$A^{c} = D \setminus A \not \in \fu$, there is some 
$F' \in \fF$ with $D_{A^{c}} \wedge F' = \Delta$. 
But now all the doubletons of $D$ contained 
in cells of $F$ lie in $A^{c}$, while those of 
$F'$ lie in $A$; accordingly, $F \wedge F' \wedge E = \Delta$, a contradiction.}
{Thus $\fu$ is an ultrafilter.  Finally, for every $F \in \fF$ we
have $\phi(F) \in \fu$ and $\phi(F) \subseteq \Sigma_F$, so
$\Sigma_F \in \fu$ and $F \in \fD_\fu$.  Hence
$\fF \subseteq \fD_\fu$, and since $\fF$ is maximal and $\fD_\fu$ is
proper, $\fF = \fD_\fu$.} \end{proof}

Note that the proof of the converse required no non-principality hypothesis: if $\fF$ is principal, the set $D$ above is finite, $\fu$ is principal, and the representation still obtains.

\begin{remark}[Essential uniqueness]
\label{rem:uniqueness}
{The representing pair $(D, \fu)$ is unique up to restriction. Suppose $\fD_\fu = \fD_{\fu'}$ for 
ultrafilters $\fu, \fu'$ on sets $D, D'$ of doubletons. 
Then $D \cap D'$ is non-empty, else 
$\Delta = D_{D} \wedge D_{D'} \in \fD_{\fu} = \fD_{\fu'}$, a contradiction. 
Let $A \subseteq D$ with $A \in \fu$. Then 
$D_{A} \in \fD_{\fu} = \fD_{\fu'}$, so 
for some $B \subseteq D'$ with $B \in \fu'$, 
we have $D_{A} \geq D_{B}$. But then every 
doubleton in $B$ is also a doubleton in $A$, 
i.e., $B \subseteq A$, whence, $A \cap D' \in \fu'$. 
Similarly, every $B \in \fu'$ has 
$B \cap D \in \fu$. Thus, $\fu$ and $\fu'$ have 
the same trace on $D \cap D'$. }

\end{remark}

Call a partition $F \in \PP(X)$ \textbf{bounded} iff
$\sup_{a \in F} \card(a) < \infty$.  Theorem~\ref{thm:typeI} shows that
every type-I \upf{} is generated by partitions all of
whose cells have at most two elements.  By contrast, a type-II \upf{}
contains no bounded partition at all: a partition $F$ is bounded iff there
exists a finite partition $E$ with $E \wedge F = \Del$ (color the points of
each cell of $F$ injectively with $n = \sup_{a \in F}\card(a)$ colors, and
let $E$ be the partition into color classes; conversely if $E$ has $k$
cells and $E \wedge F = \Del$, each cell of $F$ is a sub-section of $E$ and
so has at most $k$ points). A type-II \upf{} contains $E$, {and so, cannot contain $F$.} 
%hence not $F$.  
Section~\ref{sec:typeII-RK} sharpens this observation
(Corollary~\ref{cor:unbounded}).

% ------------------------------------------------------------------
\section{Type-II \Upf{}s and the Rudin--Keisler Order}
\label{sec:typeII-RK}
% ------------------------------------------------------------------

To every subset $a \subseteq X$ we associate a partition
\[
  \hat{a} \;=\; \{a\} \;\cup\; \bigl\{\{x\} \mid x {~\in X \setminus a}\}
%\notin a\bigr\}.
\]
{We adopt the convention that $\hat a$ is defined for every
$a \subseteq X$, with $\hat a = \Del$ precisely when $a$ has at most one
element.  With this convention the identity
$\hat{a} \wedge \hat{b} = \widehat{a \cap b}$ holds unconditionally ---
resolving a query of the second author: for disjoint $a, b$ both sides are
$\Del$.  (This is also the convention of the machine formalization.)}  If $\fu$ is any
filter on $X$, let $\hat{\fu} = \{\hat{a} \mid a \in \fu\}$.  Since
$\hat{\fu}$ is closed under finite meets, the \pf{}
$\langle \hat{\fu} \rangle$ generated by $\hat{\fu}$ consists of coarsenings
of elements of $\hat{\fu}$.  As long as $\fu$ is non-principal, $\hat{\fu}$
does not contain $\Del$, and therefore $\langle \hat{\fu} \rangle$ is proper.

If $\fu$ is an ultrafilter and $E$ is a finite partition of $X$, then $\fu$
must contain some cell $a \in E$.  Thus, as $E \geq \hat{a}$, we have
$\PPo(X) \subseteq \langle\hat\fu\rangle$, and any
\upf{}   extending $\hat{\fu}$ is of type~II.

We will  establish that  
%%%, conversely, every type-II \upf{} arises as such an extension for a uniquenon-principal ultrafilter $\fu$ on~$X$.

If $\fF$ is a \pf, we define the \textbf{heart} of $\fF$,
written $h(\fF)$, to be the set of all $a \subseteq X$ such that
$\hat{a} \in \fF$.  Since the map $a \mapsto \hat{a}$ is meet-preserving
and order-preserving,
$h(\fF)$ is a filter on $X$.  If $b \in h(\langle\hat{\fu}\rangle)$, then
$\hat{b} \geq \hat{a}$ for some $a \in \fu$, whence $b \supseteq a$, whence
$b \in \fu$.  Thus $h(\langle\hat{\fu}\rangle) = \fu$.

\begin{theorem}[cf.\ {\cite[Proposition~2.2]{prrs1}}]
\label{thm:heart}
If $\fF$ is a type-II \upf, then $h(\fF)$ is a non-principal
ultrafilter on $X$.
\end{theorem}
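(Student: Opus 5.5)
We need to show three things about $h(\fF)$: it is a proper filter, it is an ultrafilter, and it is non-principal. We already know $h(\fF)$ is a filter on $X$. It is proper because $\emptyset \notin h(\fF)$, since $\hat\emptyset = \Del \notin \fF$.

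\emph{Ultrafilter.} Let $a \subseteq X$; we must show $a$ or $a^c$ lies in $h(\fF)$. Since $\fF$ is type~II, the two-cell partition $\{a, a^c\}$ (or $\{X\}$) belongs to $\fF$.
\begin{itemize}[nosep]
\item Suppose $\hat a \notin \fF$ and $\widehat{a^c} \notin \fF$.
\item By the maximality criterion of Section~1, there are $F, G \in \fF$ with $F \wedge \hat a = \Del$ and $G \wedge \widehat{a^c} = \Del$.
\item The first condition says $a$ is a sub-section of $F$: each cell of $F$ meets $a$ in at most one point. Likewise each cell of $G$ meets $a^c$ in at most one point.
\item Put $H = F \wedge G \wedge \{a, a^c\} \in \fF$. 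Every cell of $H$ lies inside $a$ or inside $a^c$. A cell inside $a$ is contained in a cell of $F$, so it has at most one point. A cell inside $a^c$ is contained in a cell of $G$, so it also has at most one point.
\item Hence $H = \Del$, contradicting properness of $\fF$.
\end{itemize}
So $h(\fF)$ is an ultrafilter.

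\emph{Non-principal.} Suppose $h(\fF)$ were principal, generated by $\{x\}$. Then $\widehat{\{x\}} = \Del$ would lie in $\fF$, which is impossible. Alternatively, $\{x\}^c \notin h(\fF)$ gives the same contradiction via the previous step.

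Most of the work is in the ultrafilter step. The key point is that the finite partition $\{a, a^c\}$, available because $\fF$ is type~II, separates the two sub-section witnesses $F$ and $G$, so that their meet collapses to $\Del$.
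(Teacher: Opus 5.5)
Your proof is correct and is essentially the paper's argument: you use $\{a,a^c\}\in\fF$ (type II) and take witnesses from the maximality criterion for $\hat a$ and $\widehat{a^c}$. You then show that their meet with $\{a,a^c\}$ collapses to $\Del$, and get non-principality from $\widehat{\{x\}}=\Del\notin\fF$. Your explicit check that $\emptyset\notin h(\fF)$, and your handling of the degenerate case $a\in\{\emptyset,X\}$, are small points the paper leaves implicit.
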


\begin{proof}
Let $a \subseteq X$.  Then $\{a, a^c\} \in \fF$.  Suppose that neither
$\hat{a}$ nor $\widehat{a^c}$ belongs to $\fF$.  Since $\fF$ is maximal,
there exist partitions $F_1, F_2 \in \fF$ with
$\hat{a} \wedge F_1 = \widehat{a^c} \wedge F_2 = \Del$.
But then $\{a, a^c\} \wedge (F_1 \wedge F_2) = \Del$: each cell of
$F_1 \wedge F_2$ meets $a$ at most once and meets $a^c$ at most once, so its
intersections with the two cells of $\{a,a^c\}$ are singletons or empty.
This contradicts the properness of $\fF$.  
{
Finally, since $\widehat{\{x\}} = \Delta \not \in \fF$ 
for all $x \in X$, $h(\fF)$ is non-principal.}
%Finally, $h(\fF)$ is non-principal: were it principal, it would contain a singleton $\{x\}$,and then $\widehat{\{x\}} = \Del \in \fF$, absurd.
\end{proof}

Two simple lemmas now do a surprising amount of work.  For a partition $F$, write $\sing(F)$ for the union of the singleton cells of $F$ and, for $n \geq 1$, $\sing_n(F)$ for the union of the cells of $F$ with at most $n$ elements (so $\sing_1 = \sing$).

\begin{lemma}[Splitting Lemma]
\label{lem:splitting}
For any partition $F$, {$\hat{\sing(F)} \wedge F = \Delta$.} 
%with $s = \sing(F)$ we have
%$\hat{s} \wedge F = \Del$ (provided $s \neq \emptyset$; if $s = \emptyset$ the conclusion is vacuous in the applications below).
\end{lemma}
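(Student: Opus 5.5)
The plan is to compute the meet $\hat{s} \wedge F$ directly, where $s = \sing(F)$, and show that every cell of it is a singleton. Recall that in $\PP(X)$ the meet of two partitions is their common refinement: its cells are the non-empty sets $c \cap d$ with $c \in \hat s$ and $d \in F$. So it suffices to show that every non-empty intersection of a cell of $\hat s$ with a cell of $F$ has at most one point. Equivalently, as noted at the start of Section~\ref{sec:typeI}, it suffices to show that every cell of $\hat s$ is a sub-section of $F$.

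First I dispose of the degenerate case. By the paper's convention, if $s$ has at most one element then $\hat s = \Del$, and $\Del \wedge F = \Del$ trivially. So assume $\card(s) \geq 2$, so that the cells of $\hat s$ are $s$ itself together with the singletons $\{x\}$ for $x \notin s$.

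Next I check the two kinds of cell of $\hat s$ against an arbitrary cell $d \in F$.
\begin{itemize}
\item A singleton cell $\{x\}$ meets $d$ in at most one point, trivially.
\item For the cell $s$: by definition $s$ is the union of the singleton cells of $F$. If $d$ is a singleton cell of $F$, then $s \cap d = d$ is a single point. If $d$ is not a singleton cell, then $d$ is disjoint from every singleton cell of $F$, since the cells of a partition are pairwise disjoint, and so $s \cap d = \emptyset$.
\end{itemize}
Hence $s$ is a sub-section of $F$, and every cell of $\hat s \wedge F$ is a singleton, i.e.\ $\hat s \wedge F = \Del$.

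There is no real obstacle. The only point needing care is the convention for $\hat a$ when $a$ has at most one element, and the paper's convention handles it so that the statement holds unconditionally.
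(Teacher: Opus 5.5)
Your proof is correct and is essentially the paper's argument: the paper likewise takes the cells of $\hat s \wedge F$ to be the sets $s \cap c$ ($c \in F$) together with singletons from $s^c$, and notes that $s \cap c = c$ when $c$ is a singleton cell and $s \cap c = \emptyset$ otherwise. Your explicit treatment of the degenerate case $\card(s) \le 1$, via the convention $\hat s = \Del$, is a small point the paper leaves implicit.
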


\begin{proof} 
The cells of $\hat s \wedge F$ are the sets $s \cap c$ ($c$ a cell of $F$) together with singletons drawn from $s^c$.  If $c$ is a singleton cell then
$c \subseteq s$ and $s \cap c = c$ is a singleton; if $c$ is not a singleton then $c \cap s = \emptyset$.  So every cell of $\hat s \wedge F$ is a singleton.
\end{proof}

\begin{lemma}[Coloring Lemma]
\label{lem:coloring}
Let $\fF$ be a type-II \upf{} with heart $\fu$, and let $F \in \fF$.  Then
for every $n \geq 1$, $\sing_n(F) \notin \fu$.
\end{lemma}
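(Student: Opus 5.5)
Suppose, for contradiction, that $s := \sing_n(F) \in \fu$ for some $n \geq 1$. The plan is to produce a member of $\fF$ whose meet with a finite partition is $\Del$. Since $\fF$ is of type~II it contains $\PPo(X)$, so this would put $\Del$ in $\fF$, contradicting properness. The finite partition will come from the coloring observation at the end of Section~\ref{sec:typeI}: bounded partitions are exactly those that meet some finite partition in $\Del$.

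\textbf{Step 1.} Since $s \in \fu = h(\fF)$, we have $\hat s \in \fF$. Put $G := \hat s \wedge F \in \fF$.

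\textbf{Step 2.} Identify the cells of $G$. They are the sets $s \cap c$ for $c$ a cell of $F$, together with singletons from $s^c$. Because $s$ is a union of cells of $F$, each $s \cap c$ is either $c$ itself, with $\card(c) \leq n$, or empty. So every cell of $G$ has at most $n$ elements, and $G$ is bounded.

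\textbf{Step 3.} Color the points of each cell of $G$ injectively with $n$ colors, and let $E$ be the partition of $X$ into the (at most $n$) color classes. Then $E \in \PPo(X) \subseteq \fF$ and $E \wedge G = \Del$. Both $E$ and $G$ lie in $\fF$, so $\Del \in \fF$, which is the desired contradiction. Equivalently, one can simply cite the earlier remark that a type-II \upf{} contains no bounded partition, applied to $G$.

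\textbf{Main obstacle.} There is essentially none. The only point needing care is the verification in Step~2 that $\hat s \wedge F$ is bounded, and it rests on $\sing_n(F)$ being a union of whole cells of $F$. The case $n=1$ is just the Splitting Lemma (Lemma~\ref{lem:splitting}): there $\hat s \wedge F = \Del$ directly. For general $n$, the coloring step takes the place of the splitting step.
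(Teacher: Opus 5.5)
Your argument is correct, and it reaches the contradiction by a slightly different decomposition than the paper's. The paper splits into cases. For $n=1$ it applies the Splitting Lemma directly. For $n\ge 2$ it colors only the cells of $F$ of size at most $n$, adjoins $X\setminus\sing_n(F)$ as an extra class to get a finite $E\in\fF$, and notes that $G=F\wedge E\in\fF$ satisfies $\sing(G)\supseteq\sing_n(F)\in\fu$, contradicting the case $n=1$ applied to $G$. You instead use the heart first: meeting $F$ with $\hat s$ discards the large cells, so $\hat s\wedge F$ is a bounded member of $\fF$. The end-of-Section~2 observation that a type-II \upf{} contains no bounded partition then finishes the job. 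Underneath, both proofs produce a finite coloring partition $E$ with $\hat s\wedge F\wedge E=\Del$ (in the paper via $\hat s\le\widehat{\sing(G)}$); they differ essentially in which pair of meetands is combined first. Your route is uniform in $n$, dispenses with the Splitting Lemma, and shows that the Coloring Lemma is just the ``no bounded partitions'' remark localized to a $\fu$-large set. The paper's route keeps the Splitting Lemma as the single combinatorial engine, and treats the singleton case (the one later invoked in Proposition~\ref{prop:positive}(2)) as the base from which the rest follows.
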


\begin{proof}
First the case $n = 1$.  If $s = \sing(F) \in \fu$ then
$\hat s \in \fF$, and by Lemma~\ref{lem:splitting},
$\hat s \wedge F = \Del$ with both meetands in $\fF$, contradicting
properness.

Now suppose $\sing_n(F) \in \fu$ for some $n \geq 2$.  Color the points of
each cell of $F$ of size at most $n$ injectively with the colors
$1, \ldots, n$, and let $E$ be the finite partition of $X$ into the $n$
color classes together with $X \setminus \sing_n(F)$ (omitted if empty).
Since $\fF$ is of type II, $E \in \fF$, so $G := F \wedge E \in \fF$.  {Every cell of $G$ 
meets $\sing_{n}(F)$ in a singleton} 
%On $\sing_n(F)$, every cell of $G$ is a singleton 
(a cell of size $\leq n$
meets each color class at most once).  Hence
$\sing(G) \supseteq \sing_n(F) \in \fu$, so $\sing(G) \in \fu$,
contradicting the case $n = 1$ applied to $G$.
\end{proof}

\begin{definition}
Let $\fu$ be a non-principal ultrafilter on $X$ and $F$ a partition.  Say $F$ is \textbf{$\fu$-bounded} if $\sing_n(F) \in \fu$ for some $n$, and
\textbf{$\fu$-unbounded} otherwise.
\end{definition}

\begin{corollary}
\label{cor:unbounded}
Every member of a type-II \upf{} is $\fu$-unbounded, where $\fu$ is the
heart.
\end{corollary}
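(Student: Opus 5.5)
The corollary is a direct restatement of the Coloring Lemma (Lemma~\ref{lem:coloring}) in the language of the preceding definition, so the plan is simply to unwind the definitions. Let $\fF$ be a type-II \upf{} and let $\fu = h(\fF)$. By Theorem~\ref{thm:heart}, $\fu$ is a non-principal ultrafilter on $X$. This matters because the definition of $\fu$-bounded is stated only for such $\fu$, so the notion is meaningful here.

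Now fix $F \in \fF$. By definition, $F$ is $\fu$-unbounded iff $\sing_n(F) \notin \fu$ for every $n \geq 1$. The Coloring Lemma gives exactly this for every $n \geq 1$ and every member of a type-II \upf{} with heart $\fu$.

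One small point needs a check: whether the definition's ``for some $n$'' is meant to include $n = 0$. If it is, then $\sing_0(F)$ is the union of cells with at most $0$ elements, which is $\emptyset$, and $\emptyset \notin \fu$. So that case is trivial as well. Hence no $n$ witnesses $\fu$-boundedness, and $F$ is $\fu$-unbounded.

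There is no real obstacle here, since all the work was done in the Coloring Lemma. The one substantive dependency is the type-II hypothesis, which that lemma used to ensure that the finite coloring partition $E$ lies in $\fF$.

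It is also worth noting that the corollary sharpens the earlier remark that a type-II \upf{} contains no bounded partition. A bounded partition $F$ with all cells of size at most $n$ has $\sing_n(F) = X \in \fu$, so it is $\fu$-bounded and is therefore excluded by the corollary.
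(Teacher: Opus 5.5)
Your proposal is correct and follows the paper exactly: the corollary is stated immediately after the Coloring Lemma (Lemma~\ref{lem:coloring}) with no separate proof, because it is that lemma restated through the definition of $\fu$-unboundedness. Your extra checks are sound but not required. These are the appeal to Theorem~\ref{thm:heart}, showing that the heart is a non-principal ultrafilter, and the remark that $\sing_0(F)=\emptyset$ (the paper only defines $\sing_n$ for $n \geq 1$).
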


The next proposition determines exactly which partitions {
are compatible with a given heart.} 
%can accompany a prescribed heart.

\begin{proposition}[Compatibility]
\label{prop:compat}
Let $\fu$ be a non-principal ultrafilter on $X$ and $F$ a partition.  Then
$F$ belongs to some type-II \upf{} with heart $\fu$ if and only if no
member of $\fu$ is a sub-section of $F$.
\end{proposition}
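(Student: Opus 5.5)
The plan is to reduce membership in some type-II \upf{} with heart $\fu$ to the properness of the \pf{} generated by $\hat\fu\cup\{F\}$. The key observation is that for $a \subseteq X$ we have $\hat a \wedge F = \Del$ exactly when $a$ is a sub-section of $F$. Indeed, the cells of $\hat a \wedge F$ are the sets $a\cap c$ for $c \in F$, together with singletons from $a^c$. So the meet is $\Del$ iff each $a \cap c$ has at most one point, i.e.\ iff $a$ is a sub-section of $F$.

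For necessity, suppose $F \in \fF$ with $\fF$ a type-II \upf{} and $h(\fF)=\fu$. If some $a\in\fu$ were a sub-section of $F$, then $\hat a\in\fF$ by the definition of the heart. The observation above gives $\hat a\wedge F=\Del$, which contradicts the properness of $\fF$.

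For sufficiency, assume no member of $\fu$ is a sub-section of $F$.
\begin{enumerate}[label=(\roman*)]
\item Consider the set $\{\hat a\wedge F \mid a\in\fu\}$. It is closed under finite meets, since $\hat a\wedge\hat b=\widehat{a\cap b}$ and $a\cap b\in\fu$. None of its members equals $\Del$, by the hypothesis and the observation. Hence the upward closure $\fF_0$ of this set is a proper \pf{} containing $F$ and $\hat\fu$.
\item By Zorn's Lemma, extend $\fF_0$ to a \upf{} $\fF$. Since $\fF\supseteq\hat\fu$, the discussion before Theorem~\ref{thm:heart} shows $\PPo(X)\subseteq\langle\hat\fu\rangle\subseteq\fF$, so $\fF$ is of type~II.
\item By Theorem~\ref{thm:heart}, $h(\fF)$ is an ultrafilter. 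It contains $\fu$, because $\hat a\in\fF$ for every $a\in\fu$. As $\fu$ is an ultrafilter, it is maximal, so $h(\fF)=\fu$.
\end{enumerate}

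There is no serious obstacle. The only point needing care is the bookkeeping with the convention $\hat a=\Del$ when $\card(a)\le 1$. This case does not arise here, because $\fu$ is non-principal, so every member of $\fu$ is infinite; in any event such an $a$ would be a sub-section and is excluded by the hypothesis.
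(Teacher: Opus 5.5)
Your proposal is correct and follows the paper's proof essentially step for step. Both arguments identify $\hat a \wedge F = \Del$ with $a$ being a sub-section of $F$, show that the \pf{} generated by $\hat\fu \cup \{F\}$ is proper, extend it by Zorn's Lemma to a type-II \upf{}, and pin down the heart as $\fu$ using the maximality of $\fu$.
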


%{---- To here ----}

\begin{proof}
For $a \subseteq X$ non-empty, $\hat{a} \wedge F = \Del$ iff every cell of
$F$ meets $a$ at most once, i.e.\ iff $a$ is a sub-section of $F$.  
{
In this case, no proper parfilter contains both $\hat{a}$ and $F$. Thus, if $F$ has a subsection $a \in \fu$, no parfilter extending $\hat{\fu}$, and so, 
no type-II \upf{} with heart $\fu$, contains $F$. 
}

Conversely, 
if no member of $\fu$ is a sub-section of $F$, then $\hat a \wedge F \neq \Del$ for every $a \in \fu$. 
{Hence the \pf{} generated by $\hat\fu \cup \{F\}$ is proper (its
general element coarsens some $\hat a \wedge F \neq \Del$), and by Zorn's
Lemma it extends to an \upf.} {Thus, $F$ belongs to a \upf{} $\fF$ that 
contains $\hat{\fu}$ (and is thus type~II), 
and $h(\fF) \supseteq \fu$, whence $h(\fF) = \fu$. 
}

\end{proof}

In general it is possible to extend {a given} $\langle\hat{\fu}\rangle$ in many incompatible ways to an \upf.

\begin{example}
\label{ex:fubini}
Let $X = \NN \times \NN$, let $\fp$ and $\fq$ be non-principal ultrafilters
on $\NN$, and let $\fu = \fp \otimes \fq$ be their Fubini product:
$a \in \fu$ iff $\{m \mid \{n \mid (m,n) \in a\} \in \fq\} \in \fp$.  Let
$V = \bigl\{\{m\} \times \NN \mid m \in \NN\bigr\}$ (columns) and
$R = \bigl\{\NN \times \{n\} \mid n \in \NN\bigr\}$ (rows).  A sub-section
$a$ of $V$ meets each column at most once, so each set
$\{n \mid (m,n) \in a\}$ has at most one element and is not in $\fq$;
hence $a \notin \fu$.  A sub-section $a$ of $R$ meets each row at most
once, so $a = \{(g(n), n) \mid n \in \operatorname{dom} g\}$ for a partial
function $g$ {from row indices to column indices}; for each $m$ the trace of $a$ on the $m$-th column is $\{(m,n) \mid n \in g^{-1}(m)\}$, and the sets $g^{-1}(m)$ are pairwise disjoint, at most one of which can lie in $\fq$, so $\{m \mid \{n \mid (m,n) \in a\} \in \fq\}$ has at most one element and is not in $\fp$; hence $a \notin \fu$.  By Proposition~\ref{prop:compat}, both $V$ and $H$ belong to type-II \upf{}s
with heart $\fu$; but $V \wedge R = \Del$, so no \upf{} contains both.
(One readily checks that the pushforwards of $\fu$ along $V$ and $R$, in the sense introduced next, are $\fp$ and $\fq$ respectively.)
\end{example}

Recall that for any set $X$, $\bX$ denotes the Stone space of $\calP(X)$, i.e., the set of all ultrafilters on~$X$, with principal ultrafilters identified with points of~$X$.  {Any mapping $f : X \rightarrow Y$ 
yields $\beta f : \beta X \rightarrow \beta Y$ given 
by}
%If $f\colon X \to Y$ is any function, we obtain {a mapping [function...]} $\beta f\colon \bX \to \beta Y$ by
\[
  \beta f(\fu) \;=\; \bigl\{b \subseteq Y \mid f^{-1}(b) \in \fu\bigr\}.
\]
If $f$ is a surjection, so is $\beta f$: for $\fv \in \beta Y$, $(\beta f)^{-1}(\fv)$ is the set of ultrafilters extending the filter
generated by $\{f^{-1}(b) \mid b \in \fv\}$.
In particular, if $F$ is a partition of $X$ and $f\colon X \to F$ is the
canonical surjection (sending each $x$ to the cell of $F$ containing it),
then
\[
  \beta f(\fu) \;=\; \bigl\{B \subseteq F \mid \textstyle\bigcup B \in \fu\bigr\}.
\]

If $\fv = \beta f(\fu)$ for some function $f$, we write $\fv \leq \fu$.
This defines the \textbf{Rudin--Keisler pre-order} on ultrafilters.  (To
avoid set-theoretic scruples it suffices, throughout, to consider
ultrafilters on quotients of $X$ --- equivalently, on sets of cardinality
at most $\card(X)$; the types below a fixed ultrafilter then form a set.)  Two
ultrafilters $\fu$ and $\fv$ belong to the same \textbf{type} iff
$\fu \leq \fv$ and $\fv \leq \fu$; the induced partial order on types  is the
\textbf{Rudin--Keisler order}.  Every ultrafilter below $\fu$ arises, up to
a bijection of index sets, as $\beta f(\fu)$ for the canonical surjection
of a partition $F$ of $X$: given $g \colon X \to Y$, replace $Y$ by the partition of $X$ into the fibers of $g$.

It is a standard result \cite{comfort-negrepontis} that $\fv = \beta f(\fu)$ is of the same type as $\fu$ if and only if there exists $a \in \fu$ on
which $f$ is injective.  When $f\colon X \to F$ is the canonical surjection of a partition, $f$ is injective on $a$ iff $a$ is a sub-section of $F$;
and since $\fu$ is upward closed and any sub-section extends to a section,
this is equivalent to the assertion that $\fu$ contains a section of $F$.

The type of the principal ultrafilters is the least element of the Rudin--Keisler order; an ultrafilter dominated by a principal one is itself
principal.  A non-principal ultrafilter is \textbf{minimal} iff it is
equivalent to every non-principal ultrafilter lying beneath it in the
Rudin--Keisler order.

\begin{theorem}[cf.\ {\cite[Proposition~2.3]{prrs1}}]
\label{thm:minimal}
A non-principal ultrafilter $\fu$ on $X$ is of minimal type if and only if
$\langle\hat{\fu}\rangle$ is an \upf.
\end{theorem}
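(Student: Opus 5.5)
The plan is to translate both sides of the equivalence into statements about a single partition $F$ and its canonical surjection $f\colon X\to F$, and then compare them with the maximality criterion of Section~1. Two dictionary facts do the translation.

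\emph{Fact 1.} For $a\subseteq X$ with at least two points, $F\geq\hat a$ iff $a$ lies inside a single cell of $F$. Since $\fu$ is non-principal, every member of $\fu$ contains a member with two or more points. Hence $F\in\langle\hat\fu\rangle$ iff some cell of $F$ belongs to $\fu$. This says exactly that $\beta f(\fu)$ is principal.

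\emph{Fact 2.} As in the proof of Proposition~\ref{prop:compat}, $\hat a\wedge F=\Del$ iff $a$ is a sub-section of $F$. By the standard result quoted above, some $a\in\fu$ is a sub-section of $F$ iff $\beta f(\fu)$ has the same type as $\fu$.

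For ($\Rightarrow$), assume $\fu$ is minimal and let $F\notin\langle\hat\fu\rangle$. By Fact~1, $\beta f(\fu)$ is non-principal. It also lies below $\fu$, so by minimality it is equivalent to $\fu$. By Fact~2 there is then $a\in\fu$ with $\hat a\wedge F=\Del$, and $\hat a\in\langle\hat\fu\rangle$. So the maximality criterion of Section~1 holds, and since $\langle\hat\fu\rangle$ is already known to be proper, it is a \upf.

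For ($\Leftarrow$), assume $\langle\hat\fu\rangle$ is maximal and let $\fv\leq\fu$ be non-principal. As remarked before the theorem, up to a bijection of index sets we may write $\fv=\beta f(\fu)$ for the canonical surjection $f$ of some partition $F$ of $X$. Since $\fv$ is non-principal, no cell of $F$ lies in $\fu$, so $F\notin\langle\hat\fu\rangle$ by Fact~1. Maximality then gives $G\in\langle\hat\fu\rangle$ with $G\wedge F=\Del$. Choose $a\in\fu$ with $G\geq\hat a$; then $\hat a\wedge F\leq G\wedge F=\Del$. By Fact~2, $\fv$ has the same type as $\fu$, so $\fu$ is minimal.

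Neither direction involves a real obstacle. The points needing care are bookkeeping:
\begin{enumerate}[label=(\roman*)]
\item Fact~1 must respect the convention $\hat a=\Del$ for $\card(a)\leq1$. Non-principality of $\fu$ lets us shrink any member to one with at least two points.
\item The reduction of an arbitrary $\fv\leq\fu$ to a partition of $X$ must preserve principality and type. It does, because it is only a bijective reindexing.
\end{enumerate}
Both are handled by remarks already made in the text.
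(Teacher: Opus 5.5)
Your proof is correct and follows the paper's route. The paper uses the same two observations: $F \in \langle\hat\fu\rangle$ iff $\beta f(\fu)$ is principal, and $\hat a \wedge F = \Del$ iff $a$ is a sub-section of $F$. These are combined with the maximality criterion of Section~1 and the Comfort--Negrepontis criterion. The only difference is that the paper runs both directions as one chain of equivalences, while you split them.
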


\begin{proof}
First observe that for any partition $F$ with canonical surjection $f$:
\begin{enumerate}[label=(\roman*)]
\item $F \in \langle\hat\fu\rangle$ iff $F \geq \hat a$ for some
  $a \in \fu$ iff some cell of $F$ belongs to $\fu$ iff $\beta f(\fu)$ is
  principal;
\item for $a \in \fu$, $\hat a \wedge F = \Del$ iff $a$ is a sub-section of
  $F$ iff $f$ is injective on $a$.
\end{enumerate}
Since every element of $\langle\hat\fu\rangle$ coarsens some $\hat a$
($a \in \fu$), and since $G \geq \hat a$ and $F \wedge G = \Del$ force
$F \wedge \hat a = \Del$, the maximality criterion of Section~1 reads:
$\langle\hat\fu\rangle$ is an \upf{} iff for every partition $F$, either
$\beta f(\fu)$ is principal or some $a \in \fu$ is a sub-section of $F$.
By the standard result quoted above, the latter alternative says exactly
that $\beta f(\fu)$ has the same type as $\fu$.  
{Thus, $\langle \hat{\fu} \rangle$ is an 
\upf{} iff every ultrafilter below $\fu$ in the Rudin-Keisler order is either principal or equivalent 
to $\fu$ --- that is, iff $\fu$ is minimal.}  

\end{proof}

\begin{remark}
\label{rem:selective}
For ultrafilters on a countable set, minimality is equivalent to
selectivity (the Ramsey property); see Booth~\cite{booth} and
\cite{comfort-negrepontis}.  Selective ultrafilters exist under the
Continuum Hypothesis \cite{booth, comfort-negrepontis}, while Kunen showed
that after adding $\aleph_2$ random reals to a model of CH there are none
(see \cite{jech}); so the existence of hearts that determine their
\upf{}s is, for countable $X$, independent of ZFC.  For general $X$: if a
minimal $\fu$ is countably incomplete, then some countable partition of
$X$ has no cell in $\fu$, its pushforward is a non-principal ultrafilter
on a countable set lying below $\fu$, and minimality makes $\fu$
isomorphic to it; so every countably incomplete minimal ultrafilter is
isomorphic to a selective ultrafilter on $\NN$.  The countably complete
case is bound up with measurable cardinals, and we do not pursue it here.
In uniform language, Theorem~\ref{thm:minimal} says that the filter
uniformity of $\fu$ is an atom of the lattice of \emph{zero-dimensional}
uniformities iff $\fu$ is {Rudin-Keisler} minimal; compare
\cite[Proposition~2.3]{prrs1}, where the same uniformity is shown to be
an atom of the lattice of \emph{all} uniformities, for $X$ of
non-measurable cardinality, iff $\fu$ is selective in the partition sense
used there.
\end{remark}

\begin{remark}
\label{rem:baumgartner}
We record here, without proof, two assertions that we believe deserve
further study: if a type-II \upf{} $\fF$ is recovered from
$\langle\widehat{h(\fF)}\rangle$ by adjoining a single partition, then
$h(\fF)$ dominates a minimal ultrafilter with no type strictly between;
and, as James Baumgartner observed to us, this situation occurs when
$h(\fF)$ is the product of a minimal ultrafilter with itself.  Under CH, the situation contemplated here does
occur: for any prescribed ultrafilter $\mathcal G$ on $\omega$,
Pelant--Reiterman--R\"odl--Simon construct hearts above $\mathcal G$ in
the Rudin--Keisler order admitting exactly $s$ type-II \upf{}s
($1 \leq s < \omega$), each obtained from $\langle\hat\fu\rangle$ by
adjoining a single partition \cite[Theorem~4.1]{prrs2}; a heart admitting
exactly one, which is \emph{not} so obtainable
\cite[Theorem~4.2]{prrs2}; and a heart admitting $2^{\mathfrak c}$ of
them \cite[Theorem~4.3]{prrs2}.  (Their counts concern atoms of the full
lattice of uniformities, but since the atoms in question are
zero-dimensional and every \upf, viewed as a uniformity, is refined by
some atom, the counts transfer to \upf{}s.)
\end{remark}

% ------------------------------------------------------------------
\section{Fibers}
\label{sec:fibers}
% ------------------------------------------------------------------

For a function $f$ and a set $a$ we write $f[a]$ for the image
$\{f(x) \mid x \in a\}$.  It will be convenient to identify a partition $F$ of $X$ with the associated
surjection $f\colon X \to F$.  Thus we write $f \in \fF$ to mean $F \in
\fF$, and $f \wedge g$ for the surjection $X \to F \wedge G$; the surjection
associated to a partition $F, G, \ldots$ will always be denoted $f, g,
\ldots$, and vice versa.  For $a \subseteq X$ we write
$\overline{a} = \{\fw \in \bX \mid a \in \fw\}$ for the basic clopen subset of $\bX$ determined by $a$, and $a^* = \overline{a} \cap \Xs$.

Since $X$ is open in $\bX$, the set $\Xs = \bX \setminus X$ of non-principal
ultrafilters on $X$ (the \textbf{growth} of $X$) is compact. Theorem~\ref{thm:heart} asserts that $h(\cdot)$ maps the set of type-II \upf{}s onto $\Xs$ (surjectivity following from
Proposition~\ref{prop:compat} with $F = \{X\}$, or simply by extending
$\langle\hat\fu\rangle$), and
Theorem~\ref{thm:minimal} asserts that $h$ has a cross-section over the set
of minimal types.

For a partition $F$ and $\fu \in \Xs$, define the \textbf{fiber} {of $\fu$ over $F$ to be the 
set of non-principal ultrafilters on $X$ agreeing 
with $\fu$ on $F$, that is, the set}
\[
  f^*(\fu) \;:=\; (\beta f)^{-1}\bigl(\beta f(\fu)\bigr) \cap \Xs,
\]
%the set of non-principal ultrafilters on $X$ that agree with $\fu$ on the partition $F$.  
The fiber is closed, contains $\fu$, and satisfies
$\fv \in f^*(\fu) \Rightarrow f^*(\fv) = f^*(\fu)$ 
  Throughout this section, {we} fix $F$ {and} $\fu$, put $\fp = \beta f(\fu)$ and $K = f^*(\fu)$, and
let $\fk$ denote the filter on $X$ generated by
$\{f^{-1}(B) \mid B \in \fp\}$.%\\ 

Thus, $(\beta f)^{-1}(\fp)$ is the set of ultrafilters extending $\fk$. When $\fp$ is non-principal, every such extension is non-principal 
{(since if $\fv$ is a principal ultrafilter on $X$, $\beta f(\fv)$ is principal on $F$).} 
%(each cell $c$ satisfies $X \setminus c \in \fk$), 
%so that 
{Thus,} $K$ is all of $(\beta f)^{-1}(\fp)$ and $\fk = \bigcap K$ ({as any} filter {is} the intersection of the ultrafilters extending it).

\begin{lemma}[Sections]
\label{lem:sections}
Suppose $\fp$ is non-principal and let $j$ be a section of $F$.  Then the
filter $\fk[j]$ generated by $\fk \cup \{j\}$ is an ultrafilter; it is the
unique element of $K$ containing $j$ (so $\fk[j]$ is isolated in $K$, being
the sole point of the relatively clopen set $\overline{\jmath} \cap K$);
and it has the same type as $\fp$, which is the least type occurring
in~$K$.
\end{lemma}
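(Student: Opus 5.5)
The plan is to pass everything through the bijection $f|_j \colon j \to F$, which exists because $j$ is a section: $f$ restricted to $j$ is a bijection onto $F$. Let $\iota\colon F \to j$ be its inverse, sending each cell to its unique point in $j$.

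\textbf{Step 1: $\fk[j]$ is an ultrafilter.} First, $\fk[j]$ is proper. For $B \in \fp$ we have $B \neq \emptyset$, so $f^{-1}(B) \cap j = \iota[B] \neq \emptyset$. Since $\fk$ is generated by the sets $f^{-1}(B)$, which are closed under finite intersections, these sets together with $j$ form a filter base.

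For maximality, take any $a \subseteq X$ and let $B = f[a \cap j]$, so that $a \cap j = \iota[B]$. If $B \in \fp$, then $f^{-1}(B) \cap j = a \cap j \subseteq a$, so $a \in \fk[j]$. Otherwise $F \setminus B \in \fp$, and $f^{-1}(F\setminus B) \cap j = j \setminus a \subseteq X \setminus a$, so $X \setminus a \in \fk[j]$.

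\textbf{Step 2: uniqueness and isolation.} The ultrafilter $\fk[j]$ extends $\fk$, so it lies in $K$; recall that $K = (\beta f)^{-1}(\fp)$ is exactly the set of ultrafilters extending $\fk$, since $\fp$ is non-principal. If $\fv \in K$ contains $j$, then $\fv \supseteq \fk \cup \{j\}$, hence $\fv \supseteq \fk[j]$, and equality follows by maximality of $\fk[j]$. Thus $\overline{\jmath} \cap K = \{\fk[j]\}$. Since $\overline{\jmath}$ is clopen in $\bX$, the point $\fk[j]$ is isolated in $K$.

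\textbf{Step 3: the type of $\fk[j]$.} We have $\beta f(\fk[j]) = \fp$, and $j \in \fk[j]$ with $f$ injective on $j$. By the standard result quoted before Theorem~\ref{thm:minimal}, $\fk[j]$ has the same type as $\fp$. (Alternatively, $\fk[j]$ is exactly the image of $\fp$ under $\beta\iota$, and $\iota$ is a bijection onto $j$.)

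\textbf{Step 4: $\fp$ has the least type in $K$.} For every $\fv \in K$ we have $\beta f(\fv) = \fp$, so $\fp \leq \fv$ in the Rudin--Keisler order. Hence $\fp$'s type lies below every type occurring in $K$. By Step 3 this type is itself attained in $K$, namely by $\fk[j]$, so it is the least type occurring in $K$.

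The main obstacle is Step 1: choosing $B = f[a \cap j]$ and checking that $f^{-1}(B) \cap j$ is exactly $a \cap j$. Everything afterward is formal.
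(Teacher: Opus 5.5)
Your proposal is correct and follows the paper's proof essentially step for step. The paper likewise transports $\fp$ along the bijection $s_j\colon F\to j$ (your $\iota$), concludes ultrafilterness from the fact that a filter containing an ultrafilter on one of its members is an ultrafilter (where you check the $a$ versus $X\setminus a$ dichotomy directly), and then handles uniqueness, isolation and least type exactly as you do. The only real difference is your parenthetical $\fk[j]=\beta\iota(\fp)$, which makes the type computation self-contained instead of relying on the quoted Comfort--Negrepontis criterion.
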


\begin{proof} 
{
The filter generated by $\fk$ and $j$ has a base 
consisting of sets $f^{-1}(B) \cap j = \bigcup B \cap j$ for $B \in \fp$. Letting $s_{j} : F \rightarrow j$ be the bijection taking $c \in F$ to the unique element of $c \cap j$, we have $\bigcup B \cap j = 
s_{j}(B)$. Thus, this filter base is in fact 
the image under $s_{j}$ of $\fp$, and hence, an 
ultrafilter on $j$. But a filter containing an ultrafilter on one of its members is already an ultrafilter.}
%A filter containing $\fk$ and $j$ contains $f^{-1}(B) \cap j {= \bigcup B \cap j}$ for every$B \in \fp$, and these sets are non-empty since $j$ meets every cell {in $B$}. For $b \subseteq j$ we have $b = j \cap f^{-1}(f[b])$because $j$ meets eachcell exactly once; as $\fp$ is an ultrafilter, either $f[b] \in \fp$,putting $b$ in our filter, or $F \setminus f[b] \in \fp$, putting$j \setminus b$ in it.  So the trace on $j$ is an ultrafilter on $j$, and afilter containing an ultrafilter on one of its members is an ultrafilter.
Clearly $\fk[j] \in K$.  Any $\fw \in K$ with $j \in \fw$ extends $\fk \cup \{j\}$, {and hence 
equals $\fk[j]$.} 
%hence extends the ultrafilter $\fk[j]$, hence equals it.
Since $f$ is injective on $j \in \fk[j]$ and
$\beta f(\fk[j]) = \fp$, the ultrafilter $\fk[j]$ has the type of $\fp$, {by the Comfort-Negrepontis 
result \cite{comfort-negrepontis} quoted earlier.}
%\cite{comfort-negrepontis}; 
and $\fp = \beta f(\fw)$ for every
$\fw \in K$, so this type is least in $K$.
\end{proof}

{Let} %Write
$T = T_F(\fu)$ {be the set $\{\fk[j] \mid j \text{ a section of } F\}$ of ultrafilters arising 
from sections of $F$}.

\begin{lemma}[Density]
\label{lem:density}
If $\fp$ is non-principal, $T$ is dense in $K$.
\end{lemma}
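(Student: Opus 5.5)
We must show every nonempty relatively open subset of $K$ meets $T$. Since the sets $\overline{a} \cap K$ ($a \subseteq X$) form a base for the topology of $K$, it suffices to take $\fw \in K$ and $a \in \fw$, and to produce a section $j$ of $F$ with $a \in \fk[j]$. By Lemma~\ref{lem:sections}, $\fk[j]$ is the ultrafilter generated by $\fk \cup \{j\}$, so $a \in \fk[j]$ holds as soon as $j \cap f^{-1}(B) \subseteq a$ for some $B \in \fp$. The plan is therefore to find $B \in \fp$ such that every cell in $B$ meets $a$, and then to build $j$ by choice.

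\emph{Step 1.} Let $B = \{c \in F \mid c \cap a \neq \emptyset\} = f[a]$. Then $a \subseteq f^{-1}(B)$. Since $a \in \fw$, upward closure gives $f^{-1}(B) \in \fw$, and hence $B \in \beta f(\fw) = \fp$, because $\fw \in K$.

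\emph{Step 2.} Choose a section $j$ of $F$ that picks a point of $c \cap a$ for each $c \in B$, and an arbitrary point of $c$ for each $c \notin B$. Then $j \cap f^{-1}(B) \subseteq a$. The left-hand side lies in $\fk[j]$, since it is the meet of $j$ with a generator of $\fk$. So $a \in \fk[j]$, which gives $\fk[j] \in \overline{a} \cap K$. Here $\fk[j] \in K$ and it is an ultrafilter by Lemma~\ref{lem:sections}, which is where the non-principality of $\fp$ is used.

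There is no real obstacle. The only points needing care are that the basic open sets of $K$ are the traces of the $\overline{a}$, and that the image set $f[a]$ lies in $\fp$. Both follow directly from the definitions of $\beta f$ and of $K$.
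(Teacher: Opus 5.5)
Your proof is correct and is essentially the paper's own argument. The paper likewise takes $\fw \in K$ and $a \in \fw$, shows $f[a] \in \fp$ from $a \subseteq f^{-1}(f[a])$, and picks a section $j$ meeting each cell of $f[a]$ inside $a$, so that $j \cap f^{-1}(f[a]) \subseteq a$ puts $a$ in $\fk[j]$.
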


\begin{proof}
Let $\fw \in K$ and $a \in \fw$. {It will be enough to} %we 
find a point of $T$ in $\overline a \cap K$.  Let $C_a = f[a]$, the set of cells meeting $a$ Since $a \subseteq {\bigcup C_{a} =} f^{-1}(C_a)$, we {have $f^{-1}(C_a) \in \fw$ 
and hence,} %get 
$C_a \in \beta f(\fw) = \fp$.  
{
Select a point of $c \cap a$ for every $c \in C_{a}$, and extend this to a section $j$ of $F$.}
%Choose a section $j$ of $F$ that selects on each cell $c \in C_a$, a point of $c \cap a$, and on other cells selects any point.  
Then $f^{-1}(C_a) \cap j \subseteq a$. {As} %and
$f^{-1}(C_a) \cap j \in \fk[j]$, {it follows 
that} %so
$a \in \fk[j]$, i.e.\ $\fk[j] \in \overline a \cap K$.
\end{proof}

\begin{proposition}[Three descriptions of $T$]
\label{prop:threefaces}
Suppose $\fp$ is non-principal.  Then $T$ is exactly the set of isolated
points of $K$, and exactly the set of points of $K$ of least (i.e.\
$\fp$'s) type.  Moreover $\fk[j] = \fk[j']$ iff
$\{c \in F \mid j \cap c = j' \cap c\} \in \fp$, so that $j \mapsto \fk[j]$
induces a bijection from the ultraproduct
$\prod_{c \in F} c \,/\, \fp$ onto $T$.
\end{proposition}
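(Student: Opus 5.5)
We prove the three assertions in the order: isolated points, least type, then the ultraproduct bijection.

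\emph{$T$ is the set of isolated points of $K$.} Lemma~\ref{lem:sections} already shows that every $\fk[j]$ is isolated in $K$. For the converse, let $\fw\in K$ be isolated. Then there is $a\subseteq X$ with $\overline a\cap K=\{\fw\}$. By Lemma~\ref{lem:density}, $T$ is dense in $K$, so the nonempty relatively open set $\overline a\cap K$ contains a point of $T$. That point must be $\fw$, so $\fw\in T$.

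\emph{$T$ is the set of points of $K$ of least type.} By Lemma~\ref{lem:sections}, every point of $T$ has the type of $\fp$, and this type is least in $K$. Conversely, suppose $\fw\in K$ has the type of $\fp$. Since $\fp=\beta f(\fw)$, the Comfort--Negrepontis criterion gives some $a\in\fw$ on which $f$ is injective, i.e.\ a sub-section of $F$. Extend $a$ to a section $j$ of $F$. Then $j\in\fw$, so $\fw$ is an element of $K$ containing $j$. By the uniqueness clause of Lemma~\ref{lem:sections}, $\fw=\fk[j]\in T$.

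\emph{The criterion for $\fk[j]=\fk[j']$.} Put $S=\{c\in F\mid j\cap c=j'\cap c\}$, and note that $\bigcup S\cap j=\bigcup S\cap j'=j\cap j'$.
\begin{itemize}
\item[($\Leftarrow$)] Suppose $S\in\fp$. Then $f^{-1}(S)\in\fk$, so $f^{-1}(S)\cap j\in\fk[j]$. This set equals $j\cap j'$, which lies inside $j'$. Hence $j'\in\fk[j]$, and uniqueness in Lemma~\ref{lem:sections} gives $\fk[j]=\fk[j']$.
\item[($\Rightarrow$)] Suppose $S\notin\fp$. Then $F\setminus S\in\fp$, so $\bigcup(F\setminus S)\cap j\in\fk[j]$. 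This set is disjoint from $j'$, because on the cells of $F\setminus S$ the sections $j$ and $j'$ pick different points. So $j'\notin\fk[j]$, while $j'\in\fk[j']$, hence $\fk[j]\ne\fk[j']$.
\end{itemize}

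\emph{The bijection from the ultraproduct.} Identify sections of $F$ with choice functions in $\prod_{c\in F}c$. The criterion just proved says that two such functions have the same image under $j\mapsto\fk[j]$ exactly when they agree on a set in $\fp$. So the map descends to a well-defined injection from $\prod_{c\in F}c/\fp$ into $T$. It is surjective by the definition of $T$.

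The only delicate point is the converse direction of the least-type claim. There one has to go from ``same type as $\fp$'' to the existence of an injectivity set for the specific map $f$. This is exactly what the quoted standard result supplies, applied with $\fv=\beta f(\fw)=\fp$.
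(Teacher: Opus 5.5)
Your proof is correct and follows the paper's argument essentially step for step: density plus Lemma~\ref{lem:sections} for the isolated points, and the Comfort--Negrepontis criterion together with extending a sub-section to a section for the points of least type. The only variation is in the criterion for $\fk[j]=\fk[j']$. There you invoke the uniqueness clause of Lemma~\ref{lem:sections} directly, and you prove the forward direction contrapositively via a member of $\fk[j]$ disjoint from $j'$. The paper instead re-runs the trace argument on the partial section $f^{-1}(A)\cap j$ and pushes $j\cap j'$ forward to get the agreement set in $\fp$. The two routes are equivalent, and yours is slightly cleaner.
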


\begin{proof}
Points of $T$ are isolated by Lemma~\ref{lem:sections}
{and $T$ is dense by Lemma~ref{lem:density}, and thus, contains every isolated point.}  
%Conversely if$\fw$ is isolated in $K$, say $\{\fw\} = \overline a \cap K$ with $a \in \fw$, then by Lemma~\ref{lem:density} some point of $T$ lies in $\overline a \cap K$, so $\fw \in T$.  
Points of $T$ have least type by Lemma~\ref{lem:sections}; conversely if $\fw \in K$ has the type of $\fp$,
then since $\beta f(\fw) = \fp$, the standard criterion
\cite{comfort-negrepontis} provides $a \in \fw$ on which $f$ is injective,
i.e., a sub-section of $F$; extending $a$ to a section $j$ we get
$j \in \fw$ by upward closure, so $\fw = \fk[j] \in T$ by uniqueness.

For the last statement, if $A := \{c {\in F} \mid j \cap c = j' \cap c\} \in \fp$,
then $b := f^{-1}(A) \cap j = f^{-1}(A) \cap j'$ lies in both $\fk[j]$ and
$\fk[j']$; the filter generated by $\fk \cup \{b\}$ already has an
ultrafilter trace on $b$ (the argument of Lemma~\ref{lem:sections} applies to the partial section $b$, which meets every cell in $A$), so it has a
unique ultrafilter extension, whence $\fk[j] = \fk[j']$.  Conversely if $\fk[j] = \fk[j'] =: \fw$, then $j \cap j' \in \fw$, so
$f[j \cap j'] = {\{ c \in F \mid j \cap j' \cap c \not = \emptyset\} \in f(\fw) = \fp.}$ 
{But as $j$ and $j'$ are sections, 
$j \cap j' \cap c \not = \emptyset$ iff 
$j \cap c = j' \cap c$.}
%$\{c {\in F} \mid j \cap c = j' \cap c\} \in \beta f(\fw) = \fp$. 
\end{proof}

%[Added definitions of $F$ $\fu$-bounded and $\fu$-unbounded here:]
{
[Comment: For the following, we should spell out somewhere 
(here or in section 3) that if $\fp = \beta f(\fu)$, 
then $F$ is $\fu$-bounded iff for some $n \in \NN$, the set of cells of $F$ having $n$ or fewer elements belongs to $\fp$ (Let $F_n$ be set of cells of size $\leq n$; then $f^{-1}(F_n) = \bigcup F_n = \sing_n(F)$.) ]}

Note that $F$ is $\fu$-bounded iff, with $\fp = \beta f(\fu)$, the set of
cells of $F$ with at most $n$ elements lies in $\fp$ for some $n$: the
union of those cells is $\sing_n(F)$.

\begin{theorem}[Fiber Trichotomy]
\label{thm:trichotomy}
Let $\fu \in \Xs$ and let $F$ be a partition of $X$, with
$\fp = \beta f(\fu)$ and $K = f^*(\fu)$.  Exactly one of the following
holds.
\begin{enumerate}[label=\emph{(\alph*)}]
\item Some cell $c$ of $F$ belongs to $\fu$.  Then $c$ is infinite,
  $K = c^*$, and $K$ has no isolated points.
\item No cell of $F$ belongs to $\fu$, and $F$ is $\fu$-bounded.  Then
  $K = T$ is finite; if $\fp$-almost all cells of $F$ have exactly $k$ elements, then $\card(K) = k$.  In particular
  $K = \{\fu\}$ iff $\sing(F) \in \fu$.
\item No cell of $F$ belongs to $\fu$, and $F$ is $\fu$-unbounded.  Then
  $T$ is infinite and discrete, its points all of type $\fp$, and
  $K = \overline{T}$; the isolated points of $K$ are exactly $T$.
\end{enumerate}
\end{theorem}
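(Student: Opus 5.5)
The three hypotheses are mutually exclusive and exhaustive. (a) is the case $\fp$ principal. If $\fp$ is non-principal, we are in (b) or (c) according as $F$ is $\fu$-bounded or not. So we treat each case separately, using Lemmas~\ref{lem:sections} and \ref{lem:density} and Proposition~\ref{prop:threefaces} whenever $\fp$ is non-principal.

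\emph{Case (a).} Suppose $c\in\fu$ is a cell. Since $\fu$ is non-principal, $c$ is infinite. In this case $\fp$ is the principal ultrafilter at $c$. Then $\fv\in\Xs$ has $\beta f(\fv)=\fp$ iff $f^{-1}(\{c\})=c\in\fv$, so $K=c^*$. The growth of an infinite set has no isolated points: a basic set $b^*$ with $b\subseteq c$ infinite splits $b$ into two infinite pieces, giving at least two points.

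\emph{Case (b).} Choose $n$ with $\sing_n(F)\in\fu$, so the cells of size $\le n$ form a member of $\fp$. Partitioning by size, exactly one $k\le n$ has $F_k=\{c\mid \card(c)=k\}\in\fp$.

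We first show $K=T$. Color each cell of size $k$ injectively by $1,\dots,k$. For each $i$ let $a_i$ be the set of points of color $i$. Then $\bigcup F_k=a_1\cup\dots\cup a_k$, and every $\fw\in K$ contains $\bigcup F_k$, hence some $a_i$. Each $a_i$ is a sub-section of $F$, so it extends to a section $j_i$; then $j_i\in\fw$ and Lemma~\ref{lem:sections} gives $\fw=\fk[j_i]$. Hence $K=T=\{\fk[j_1],\dots,\fk[j_k]\}$.

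Next we show these $k$ points are distinct. Choose the extensions so that $j_i\cap j_{i'}\cap c=\emptyset$ for $c\in F_k$ and $i\ne i'$. Then by Proposition~\ref{prop:threefaces} $\fk[j_i]\ne\fk[j_{i'}]$, and $\card(K)=k$. Equivalently, the ultraproduct $\prod c/\fp$ of $k$-element sets has exactly $k$ elements.

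Finally, $K=\{\fu\}$ iff $k=1$ iff $\sing(F)\in\fu$. Here the case $k=1$ is governed by $\sing(F)=\bigcup F_1$, and in case (b) we have $k\ge1$.

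\emph{Case (c).} By Proposition~\ref{prop:threefaces}, $T$ is the set of isolated points of $K$, all of type $\fp$. Being a set of isolated points, $T$ is discrete. By Lemma~\ref{lem:density}, $K=\overline T$.

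The remaining point, and the main obstacle, is to show $T$ is infinite. By Proposition~\ref{prop:threefaces} it suffices to show that $\prod c/\fp$ is infinite, i.e.\ to produce, for each $m$, sections $j_1,\dots,j_m$ that are pairwise distinct on $\fp$-almost all cells. Unboundedness gives $\{c\mid\card(c)>m\}\in\fp$ for every $m$. On each such cell choose the $j_i$ to pick $m$ distinct points; on the remaining cells choose arbitrarily. Then $\{c\mid j_i\cap c=j_{i'}\cap c\}\notin\fp$ for $i\ne i'$, so $\card(T)\ge m$ for every $m$, and $T$ is infinite.
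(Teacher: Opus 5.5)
Your proposal is correct and follows the paper's proof in all three cases: the same case split, the same use of Lemmas~\ref{lem:sections}, \ref{lem:density} and Proposition~\ref{prop:threefaces}, and the same construction of $m$ pairwise-disagreeing sections in (c). The only variation is in (b). There you show directly that each $\fw \in K$ contains one of the $k$ color classes of $\bigcup F_k$, hence equals some $\fk[j_i]$ by the uniqueness in Lemma~\ref{lem:sections}. The paper instead bounds $\card(T) \le n$ by a pigeonhole on label functions and then uses finiteness plus density to get $K = T$; your version is a harmless, slightly more economical rearrangement.
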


\begin{proof}
(a) Here $\fp$ is principal {at some cell} $c \in F$, {and as $c \in \fu$ with $\fu$ non-principal, $c$ is infinite.} 
%; $c \in \fu$ with $\fu$ non-principalforces $c$ infinite.  
An ultrafilter $\fv$ agrees with $\fu$ on $F$ iff
$c \in \fv$, so $K = c^*$.  For any $\fv \in c^*$ and $a \in \fv$, the set
$a \cap c$ lies in $\fv$ and is infinite; splitting it into two infinite 
halves produces two distinct points of $c^*$ inside $\overline a$, so no point is isolated.

For (b) and (c), $\fp$ is non-principal and the lemmas above apply. 

(b) Choose $n$ and $A_0 \in \fp$ with every cell in $A_0$ of size at most
$n$; label the points of each such cell injectively with
$1, \ldots, n$.  Each section $j$ {of $F$ [?] then}  determines a label function
$A_0 \to \{1, \ldots, n\}$. Since $\fp$ is an ultrafilter, exactly one 
label value $i$ occurs on a $\fp$-large set of cells, and two sections with
the same such value agree (as selections) on a $\fp$-large set of cells,
whence $\fk[j] = \fk[j']$ by Proposition~\ref{prop:threefaces}.  Thus
$\card(T) \leq n$. Being finite, $T$ is closed, and by
Lemma~\ref{lem:density}, $K = \overline{T} = T$.  If the sizes {of the cells in $A_o$} are exactly
$k$ $\fp$-almost everywhere, the $k$ constant-label sections pairwise disagree on a $\fp$-large set, giving $\card(K) = k$ exactly.  Finally
$\sing(F) \in \fu$ iff the cells {in $A_o$} are singletons $\fp$-almost everywhere
iff $k = 1$ iff $K = \{\fu\}$ ({noting} $\fu \in K$ always).

(c) $\fu$-unboundedness says that for each $m$, the set $N_m$ of cells with at least $m$ elements lies in $\fp$.  Fix $m$ and choose sections
$j_1, \ldots, j_m$ pairwise disagreeing on every cell of $N_m$ (possible since such cells have at least $m$ points).  For $i \neq l$, the agreement
set of $j_i, j_l$ misses $N_m \in \fp$, so by
Proposition~\ref{prop:threefaces} the $\fk[j_i]$ are pairwise distinct. 
Hence $\card(T) \geq m$ for all $m$: $T$ is infinite.  Discreteness, common type, $K = \overline T$ (density plus closedness of $K$), and the
identification of the isolated points are
Lemmas~\ref{lem:sections},~\ref{lem:density} and
Proposition~\ref{prop:threefaces}. 
% (The three cases are mutually exclusive by their hypotheses.)
\end{proof}

\begin{corollary}
\label{cor:singleton-fiber}
For any partition $F$ and $\fu \in \Xs$:\; $f^*(\fu) = \{\fu\}$ if and
only if $\sing(F) \in \fu$.
\end{corollary}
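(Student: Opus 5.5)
The plan is to read this off from the Fiber Trichotomy (Theorem~\ref{thm:trichotomy}), arguing case by case according to which of (a), (b), (c) holds for $F$ and $\fu$. We always have $\fu \in f^*(\fu)$, so $f^*(\fu) = \{\fu\}$ exactly when the fiber has only one point.

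First, suppose $\sing(F) \in \fu$. Then no cell of $F$ belongs to $\fu$: a singleton cell cannot belong to the non-principal $\fu$. A non-singleton cell $c$ is disjoint from $\sing(F)$, so it cannot belong to $\fu$ either. Moreover $F$ is $\fu$-bounded, with $n=1$. So we are in case (b). There $\fp$-almost all cells have exactly $k=1$ element, because the set of singleton cells is $f[\sing(F)]$, whose preimage $\sing(F)$ lies in $\fu$. Case (b) then gives $\card(K)=1$, that is, $K=\{\fu\}$. This is also exactly the last sentence of (b).

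Conversely, suppose $f^*(\fu)=\{\fu\}$. We rule out the other cases as follows.
\begin{itemize}[label=$\circ$]
\item Case (a) is impossible, since $c^*$ is infinite for an infinite $c$; indeed it has no isolated points and is nonempty.
\item Case (c) is impossible, since $T \subseteq K$ is infinite there.
\end{itemize}
Hence case (b) holds. By (b), $K$ is finite of size $k$, where $k$ is the $\fp$-almost-constant cell size. Such a $k$ exists because the cells in $A_0$ have sizes in $\{1,\dots,n\}$, and an ultrafilter picks one size class. So $k=1$, meaning the set of singleton cells lies in $\fp$. Its preimage is $\sing(F)$, so $\sing(F)\in\fu$.

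The only point needing care is the existence of the almost-constant size $k$ in case (b). Otherwise the argument is immediate from the trichotomy's exclusivity and its statements (a)–(c); no new combinatorics is needed.
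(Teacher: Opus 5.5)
Your proposal is correct and follows essentially the paper's route: you read the equivalence off the Fiber Trichotomy, excluding cases (a) and (c) because the fiber is infinite there, and taking the equivalence from case (b). The only difference is organization. You argue by direction and show directly that $\sing(F)\in\fu$ forces case (b), whereas the paper argues case by case and checks that $\sing(F)\notin\fu$ in cases (a) and (c); this is the same argument.
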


\begin{proof}
In case (a), $K = c^*$ {is infinite}, 
so $K \not = \{\fu\}$, {and $c$ is} infinite, 
{so} $\sing(F) \cap c = \emptyset$,
exclud{ing} %es 
$\sing(F) \in \fu$. In case (b), this is part of the statement.  In case (c), $K$ is infinite and $\sing(F) = \sing_1(F) \notin \fu$ by
$\fu$-unboundedness.
\end{proof}

\begin{remark}
By the classical theorem of Frayne, Morel and Scott
\cite{fms} (see also \cite{chang-keisler}), an ultraproduct over a
countably incomplete ultrafilter is finite or of cardinality at least
$2^{\aleph_0}$.  Hence in case (c), if $\fp$ is countably incomplete ---
as it must be when $X$ is countable --- then
$\card(T) \geq 2^{\aleph_0}$.
\end{remark}

\begin{definition} 
A set $K \subseteq \Xs$ is \textbf{sparse} iff $K$ is the closure of an
infinite discrete set of ultrafilters of a common type.  (We avoid the word ``rare,'' which has an established, different meaning for ultrafilters \cite[1.6]{prrs2}.)
\end{definition}

\begin{corollary}
\label{cor:members-sparse}
Suppose no cell of $F$ lies in $\fu$ and $F$ is $\fu$-unbounded.  Then {the fiber} $f^*(\fu)$ is sparse.  In particular, if $\fF$ is a type-II \upf{} with heart $\fu$ and $F \in \fF$, then either $F \in \langle\hat\fu\rangle$ (equivalently, some cell of $F$ lies in $\fu$) and $f^*(\fu) = c^*$ for
that cell, or $f^*(\fu)$ is sparse.
\end{corollary}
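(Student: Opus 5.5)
The first assertion is a direct reading of case (c) of Theorem~\ref{thm:trichotomy}. If no cell of $F$ lies in $\fu$, then $\fp = \beta f(\fu)$ is non-principal. If moreover $F$ is $\fu$-unbounded, we are in case (c). That case gives: $T = T_F(\fu)$ is infinite and discrete, all its points have the common type of $\fp$ (Lemma~\ref{lem:sections} and Proposition~\ref{prop:threefaces}), and $K = f^*(\fu) = \overline{T}$. This is exactly the definition of sparse, with $T$ as the witnessing infinite discrete set of ultrafilters of a common type. One small point deserves a check: the closure is taken in $\Xs$ (or equivalently in $\bX$, since $K$ is closed in the compact space $\Xs$, which is closed in $\bX$). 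So there is no ambiguity in ``closure'' between the definition of sparse and the statement $K = \overline T$ in Theorem~\ref{thm:trichotomy}.

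For the second assertion, let $\fF$ be a type-II \upf{} with heart $\fu$ and $F \in \fF$. We split according to whether some cell of $F$ belongs to $\fu$.

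If some cell $c \in F$ lies in $\fu$, then $F \geq \hat c$ with $c \in \fu$, so $F \in \langle \hat\fu\rangle$. Conversely, $F \in \langle\hat\fu\rangle$ means $F \geq \hat a$ for some $a \in \fu$. If $a$ has at least two points, $a$ lies inside one cell of $F$, which is then in $\fu$ by upward closure; if $a$ has at most one point, then $\fu$ would be principal, which is excluded. This is observation (i) in the proof of Theorem~\ref{thm:minimal}. In this case, case (a) of Theorem~\ref{thm:trichotomy} gives $f^*(\fu) = c^*$.

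If no cell of $F$ lies in $\fu$, we must verify $\fu$-unboundedness. This is exactly Corollary~\ref{cor:unbounded}, derived from the Coloring Lemma~\ref{lem:coloring}: $\sing_n(F) \notin \fu$ for every $n$. We then apply the first assertion to conclude that $f^*(\fu)$ is sparse.

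No step is a real obstacle; the work was done in Lemma~\ref{lem:coloring} and Theorem~\ref{thm:trichotomy}. The only point requiring care is the equivalence ``$F \in \langle\hat\fu\rangle$ iff some cell of $F$ lies in $\fu$,'' where the degenerate convention $\hat a = \Del$ for $|a| \le 1$ must be handled. Non-principality of $\fu$ disposes of it.
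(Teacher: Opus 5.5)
Your proof is correct and is essentially the paper's own argument. The first assertion is Theorem~\ref{thm:trichotomy}(c) read against the definition of sparseness. The second splits on whether a cell of $F$ lies in $\fu$: in one case you use observation (i) from the proof of Theorem~\ref{thm:minimal} together with Theorem~\ref{thm:trichotomy}(a), and in the other you use Corollary~\ref{cor:unbounded}. Your extra checks are correct refinements of points the paper leaves implicit: the degenerate convention $\hat a = \Del$ for $\card(a) \le 1$, which non-principality of $\fu$ disposes of, and the fact that closures in $\Xs$ and in $\bX$ agree because $\Xs$ is closed.
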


\begin{proof}
The first statement is Theorem~\ref{thm:trichotomy}(c) together with the
definition of sparseness.  For the second: the equivalence
$F \in \langle\hat\fu\rangle \iff$ some cell of $F$ lies in $\fu$ was
observed in the proof of Theorem~\ref{thm:minimal}; when it fails, $F$ is
$\fu$-unbounded by Corollary~\ref{cor:unbounded}, and the first statement
applies.  (We take no position on whether the growths $c^*$ arising in the
first alternative are themselves sparse.)
\end{proof}

\begin{proposition}[Sparse sets need not be fibers]
\label{prop:sparse-not-fiber}
Let $X = \NN \times \NN$ and let $\fp$ be a non-principal ultrafilter on
$\NN$.  Let $V$ be the partition into columns, $K_V = (\beta v)^{-1}(\fp)$
the corresponding fiber, and let
$S = \{\fk_V[j_n] \mid n \in \NN\}$, where $j_n = \NN \times \{n\}$ is the
$n$-th row (a section of $V$).  Then $\overline S$ is sparse, but
$\overline S$ is not of the form $g^*(\fv)$ for any partition $G$ of $X$
and any $\fv \in \Xs$.  In fact $\overline S \subsetneq K_V$.
\end{proposition}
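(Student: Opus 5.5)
The plan is to compute the isolated points of $\overline S$ and compare them with the isolated points of an arbitrary fiber, as described by the Fiber Trichotomy (Theorem~\ref{thm:trichotomy}). The decisive point is cardinality: $\overline S$ has only countably many isolated points, while every fiber either has none, is finite, or has at least $2^{\aleph_0}$ of them.

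\emph{Step 1: $\overline S$ is sparse.} Write $\fw_n = \fk_V[j_n]$. Concretely,
\[
  \fw_n = \bigl\{a \mid \{m \mid (m,n) \in a\} \in \fp\bigr\},
\]
the copy of $\fp$ on the $n$-th row. By Lemma~\ref{lem:sections} each $\fw_n$ lies in $K_V$ and has the type of $\fp$. The rows are pairwise disjoint and $j_n \in \fw_n$, so the $\fw_n$ are distinct. Moreover $\overline{j_n}$ contains $\fw_n$ and no other $\fw_{n'}$, so $S$ is infinite and discrete. Hence $\overline S$ is sparse by definition. Since $K_V$ is closed and contains $S$, we also get $\overline S \subseteq K_V$.

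\emph{Step 2: the isolated points of $\overline S$ are exactly the points of $S$.} Each $\fw_n$ is isolated in $\overline S$. Indeed, $\overline{j_n}$ is clopen in $\bX$, so $\overline{j_n} \cap \overline S$ is the closure in $\overline{j_n}$ of $\overline{j_n} \cap S = \{\fw_n\}$, which is $\{\fw_n\}$. Conversely, a point of $\overline S \setminus S$ cannot be isolated. Its isolating clopen neighbourhood would have to meet the dense set $S$, so it would contain some $\fw_n$, and then it would not be a singleton. Thus $\overline S$ has exactly countably many isolated points.

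\emph{Step 3: $\overline S$ is not a fiber.} Suppose $\overline S = g^*(\fv)$ and run through the three cases of Theorem~\ref{thm:trichotomy}.
\begin{itemize}
\item Case (a) is impossible, since $g^*(\fv) = c^*$ has no isolated points while $\overline S$ does.
\item Case (b) is impossible, since $g^*(\fv)$ is then finite while $\overline S \supseteq S$ is infinite.
\item In case (c), the isolated points of $g^*(\fv)$ form $T_G(\fv)$. By Proposition~\ref{prop:threefaces} this set is in bijection with an infinite ultraproduct $\prod_{c\in G} c/\beta g(\fv)$. Because $X$ is countable, $\beta g(\fv)$ is countably incomplete. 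The remark following Corollary~\ref{cor:singleton-fiber} (Frayne--Morel--Scott) then gives $\card T_G(\fv) \ge 2^{\aleph_0}$, contradicting Step~2.
\end{itemize}

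\emph{Step 4: the inclusion is strict.} The partition $V$ has no cell in $\fp$-lifted form: no column lies in any point of $K_V$, because $\fp$ is non-principal. Also $V$ is unbounded, since all columns are infinite. So $K_V$ falls under case (c), and its set $T_V$ of isolated points is uncountable by the same cardinality argument. Any point of $T_V \cap \overline S$ is isolated in $\overline S$, hence lies in $S$, which is countable. Therefore some point of $T_V$ lies outside $\overline S$, and $\overline S \subsetneq K_V$.

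The main obstacle I expect is Step~2, namely checking carefully that no limit point of $S$ is isolated in $\overline S$. The density argument above should suffice for this. Everything else is a direct appeal to the trichotomy together with the cardinality remark.
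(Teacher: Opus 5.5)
Your proof is correct. Steps 1--3 follow the paper's argument closely. The paper also identifies the isolated points of $\overline S$ with the countable set $S$. It then uses the trichotomy together with Frayne--Morel--Scott: the isolated points of a fiber are either absent, or finite in number (which forces the whole fiber to be finite), or at least $2^{\aleph_0}$ in number.

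You depart from the paper in Step 4. The paper proves $\overline S \subsetneq K_V$ with an explicit witness. The diagonal $j_\Delta = \{(n,n) \mid n \in \NN\}$ is a section of $V$, so $\fk_V[j_\Delta]$ is isolated in $K_V$ by the clopen set $\overline{j_\Delta}$. That neighborhood contains no $\fw_n$: otherwise $\fw_n$ would contain $j_n \cap j_\Delta = \{(n,n)\}$ and be principal. Hence $\fk_V[j_\Delta] \notin \overline S$. This argument is elementary and never uses Frayne--Morel--Scott.

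Your cardinality count is valid: $T_V$ is uncountable, while $T_V \cap \overline S \subseteq S$ is countable. But it invokes Frayne--Morel--Scott a second time. It is also redundant, because $K_V = v^*(\fw)$ for any $\fw \in K_V$ is itself a fiber, so Step 3 already gives $\overline S \neq K_V$.

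So your route costs nothing once Frayne--Morel--Scott is in play. The paper's route gives a concrete point of $K_V \setminus \overline S$ without that theorem. The paper values this independently as an exhibition of sparse proper subsets of fibers, while noting that the strict containment is not needed for the main conclusion.
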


\begin{proof} { By Lemma \ref{lem:sections}, each 
$\fk[j_k] \in S$ is the unique ultrafilter in $K$ containing $j_k$. As distinct rows are disjoint, 
no two belong to a common ultrafilter, and thus, 
the $\fk[j_k]$ are distinct. So $S$ is countably infinite. Lemma \ref{lem:sections} also tells us 
that each $\fk[j]$ has type $\fp$, and is an isolated point of $K_{V}$. It is therefore also isolated in $\overline{S}$. Since $K_{V}$ is closed, $\overline{S} \subseteq K_{V}$.  Since $\overline{S}$ is the closure of a discrete set, its isolated points are exactly those of $S$. }
{So $\overline S$ is sparse, with isolated-point set the countably
infinite $S$.}

{For the strict containment, let $j_{\Delta} = \{(n,n) \mid n\}$ be
the diagonal section of $V$.  Since the ultrafilters in $K_V$ are
non-principal and $j_k \cap j_{\Delta}$ is a singleton, no member of $K_V$
contains both $j_k$ and $j_\Delta$; thus
$\fk_V[j_{\Delta}] \notin S$.  Being isolated in $K_V$
(Lemma~\ref{lem:sections}), $\fk_V[j_{\Delta}]$ then does not belong to
$\overline{S}$ either: its isolating neighborhood meets $\overline S$ only
in points of $S$.  Thus $\overline{S} \subsetneq K_{V}$; in particular
$\overline S$ is not $v^{\ast}(\fv)$ for any $\fv \in K_{V}$.} 
%Can this be leveraged to simplify the rest of the 
%proof?]} 
%To obtain the rest of the proof, it would suffice to know that (i) fibres over incomparable partitions are incomparable, and (ii) then deal with the more limited case in which $F \leq G$ (in which case fibre over $\fq = \beta \phi (\fp) \in \beta G$ contains fibre over $\fp \in \beta F$.) ]}

Now suppose $\overline S = g^*(\fv)$ for some partition $G$ and
$\fv \in \Xs$, and let $\fq = \beta g(\fv)$.  If $\fq$ is principal at a
cell $c$ of $G$, then $c$ is infinite and $g^*(\fv) = c^*$ has no isolated
points (Theorem~\ref{thm:trichotomy}(a)), contradicting
$S \neq \emptyset$.  So $\fq$ is non-principal. Since $X$ is countable,
$G$ is countable and {thus} $\fq$ is countably incomplete.  By Theorem~\ref{thm:trichotomy}(b,c) the isolated points of $g^*(\fv)$ form
the set $T_G(\fv)$, which is in bijection with the ultraproduct 
$\prod_{c \in G} c\,/\,\fq$; by Frayne--Morel--Scott \cite{fms, chang-keisler} this ultraproduct is finite or of cardinality at least 
$2^{\aleph_0}$.  In the finite case, $g^*(\fv) = T_G(\fv)$ is finite (Theorem~\ref{thm:trichotomy}(b)), contradicting the infinitude of
$\overline S$; in the other case the isolated points of $g^*(\fv)$ have cardinality at least $2^{\aleph_0} \neq \aleph_0 = \card(S)$.  Either way, 
we contradict the fact that the isolated points of $\overline S$ are exactly the countably infinite set $S$. 
\end{proof}

\begin{problem}
\label{prob:fibers}
Characterize intrinsically (topologically, or in terms of the
Rudin--Keisler structure of their points) the closed subsets of $\Xs$ of the form $f^*(\fu)$ ($F$ a partition of $X$, $\fu \in \Xs$).  By Theorem~\ref{thm:trichotomy} and
Proposition~\ref{prop:sparse-not-fiber}, in the $\fu$-unbounded case
sparseness is necessary but not sufficient.
\end{problem}

% ------------------------------------------------------------------
\section{The Representation Problem}
\label{sec:representation}
% ------------------------------------------------------------------

Let $\fF$ be a type-II \upf{} with heart $\fu$, and put
\[
  \calK(\fF) \;=\; \bigl\{f^*(\fu) \mid f \  \ \in \fF\bigr\}.
\]
%{be the set of the fibers over $\fu$. It is easy to see that if $G \leq F$, then $f^{\ast}(\fu) \subseteq g^{\ast}(\fu)$. Thus, $f \mapsto f^{\ast}(\fu)$ gives an orer-reversing mapping from $\fF$ to $\calK(\fF)$, the latter ordered by inclusion. It might be hoped that the order-structure of $\calK(\fF)$ might determine $\fu$. However, as we shall show,  }

One might hope that $\calK(\fF)$ is a maximal filter in a suitable
semilattice of closed subsets of $\Xs$, and that the assignment
$\fF \mapsto \calK(\fF)$ identifies the type-II \upf{}s with heart
$\fu$ with maximal filters of sparse closed sets converging properly to
$\fu$.  The purpose of this section is to show that this hope fails, to
record what remains true, and to pose what we believe is the correct open
problem.

We begin with what is true.

\begin{proposition}
\label{prop:positive}
Let $\fF$ be a type-II \upf{} with heart $\fu$.  Then:
\begin{enumerate}[label=\emph{(\arabic*)}]
  \item $(f \wedge g)^*(\fu) \subseteq f^*(\fu) \cap g^*(\fu)$ for all
    $F, G$; consequently $\calK(\fF)$ is downward directed under
    inclusion.
  \item For all $f \in \fF$, $f^*(\fu) \neq \{\fu\}$.
  \item $\bigcap \calK(\fF) = \{\fu\}$: the family $\calK(\fF)$ converges
    properly to the heart, which is therefore recoverable from it.
\end{enumerate}
\end{proposition}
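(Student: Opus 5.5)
The three parts are nearly independent. For (1), the key tool is monotonicity of fibers under refinement. We then get (2) from the Coloring Lemma via Corollary~\ref{cor:singleton-fiber}. Finally, (3) is a separation argument using the partitions $\hat a$, $a\in\fu$.

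For (1), suppose $E \le F$. Then each cell of $F$ is a union of cells of $E$, so there is a map $\phi\colon E\to F$ with $f=\phi\circ e$. Hence $\beta f=\beta\phi\circ\beta e$. If $\beta e(\fv)=\beta e(\fu)$, applying $\beta\phi$ gives $\beta f(\fv)=\beta f(\fu)$, so $e^*(\fu)\subseteq f^*(\fu)$. Taking $E=F\wedge G$, which refines both $F$ and $G$, yields $(f\wedge g)^*(\fu)\subseteq f^*(\fu)\cap g^*(\fu)$. Since $\fF$ is closed under meets, $(f\wedge g)^*(\fu)$ lies in $\calK(\fF)$ whenever $f,g\in\fF$, and it is contained in both $f^*(\fu)$ and $g^*(\fu)$. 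So $\calK(\fF)$ is downward directed.

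For (2), by Corollary~\ref{cor:singleton-fiber} we have $f^*(\fu)=\{\fu\}$ iff $\sing(F)\in\fu$. The Coloring Lemma~\ref{lem:coloring}, with $n=1$, says $\sing(F)\notin\fu$ for every $F\in\fF$. So $f^*(\fu)\neq\{\fu\}$.

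For (3), every fiber contains $\fu$, so $\fu\in\bigcap\calK(\fF)$. For the reverse inclusion, take $\fv\in\Xs$ with $\fv\neq\fu$. Choose $a\in\fu$ with $a\notin\fv$, so $a^c\in\fv$. Since $\fu$ is the heart, $\hat a\in\fF$. We first need to check that $\hat a$ separates $\fv$ from $\fu$. Let $h$ be the canonical surjection for $\hat a$. The set $\{a\}\subseteq\hat a$ satisfies $h^{-1}(\{a\})=a$, which lies in $\fu$ but not in $\fv$. So $\beta h(\fv)\neq\beta h(\fu)$, i.e.\ $\fv\notin h^*(\fu)$. This is just case (a) of Theorem~\ref{thm:trichotomy}: $h^*(\fu)=a^*$. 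Therefore $\bigcap\calK(\fF)=\{\fu\}$. Because $\calK(\fF)$ is a downward directed family of nonempty closed sets in the compact space $\Xs$ with intersection $\{\fu\}$, every neighborhood of $\fu$ contains some member of $\calK(\fF)$. That is the sense in which the family converges properly to $\fu$, and it shows the heart is recovered as the unique common point.

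Nothing here is genuinely hard. The only step needing care is the monotonicity $e^*(\fu)\subseteq f^*(\fu)$ for $E\le F$, i.e.\ writing $f$ as a composite through $e$. The convergence claim in (3) also needs compactness, beyond the intersection computation itself, to pass from empty intersection to a single member missing the complement of a neighborhood.
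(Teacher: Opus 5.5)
Your proof is correct and matches the paper's argument step for step. You factor the canonical surjections of $F$ and $G$ through that of $F\wedge G$ for (1), use Lemma~\ref{lem:coloring} with $n=1$ together with Corollary~\ref{cor:singleton-fiber} for (2), and separate points via $\hat a^*(\fu)=a^*$ for $a\in\fu$ in (3).

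Your extra compactness argument for the convergence claim is valid but not needed. The members $a^*=\hat a^*(\fu)$ ($a\in\fu$) of $\calK(\fF)$ already form a neighborhood base at $\fu$ in $\Xs$.
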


\begin{proof}
(1) If $\fv$ agrees with $\fu$ on $F \wedge G$ it agrees with $\fu$ on
every coarsening of $F \wedge G$, in particular on $F$ and on $G$
(pushforwards factor through the canonical surjections
$F \wedge G \to F$, $F \wedge G \to G$).  Directedness follows since
$f \wedge g \in \fF$ whenever $f, g \in \fF$.

(2) By Lemma~\ref{lem:coloring}, $\sing(F) \notin \fu$; apply
Corollary~\ref{cor:singleton-fiber}.

(3) For $a \in \fu$ we have $\hat a \in \fF$, and the fiber of $\hat a$ at
$\fu$ is computed directly: any ultrafilter containing $a$ has pushforward
along $\hat a$ equal to the principal ultrafilter at the cell $a$, while
an ultrafilter omitting $a$ has a pushforward omitting $\{a\}$; since
$a \in \fu$, an ultrafilter $\fv \in \Xs$ agrees with $\fu$
on $\hat a$ iff $a \in \fv$.  So
$\hat a^*(\fu) = a^*$, and
$\bigcap \calK(\fF) \subseteq \bigcap_{a \in \fu} a^* = \{\fv \in \Xs \mid
\fu \subseteq \fv\} = \{\fu\}$.  Conversely, $\fu$ lies in every fiber $f^*(\fu)$.
\end{proof}

The hope rests on the assertion that the containment in Proposition~\ref{prop:positive}(1) is an equality --- that fibers at $\fu$ form a semilattice under intersection, with
$f^*(\fu) \cap g^*(\fu) = (f \wedge g)^*(\fu)$.  This is false:\footnote{Again, I'd urge revising this paragraph to avoid reference to the previous draft.}

\begin{proposition}
\label{prop:meet-vs-cap}
Let $\fq$ be any non-principal ultrafilter on $\NN$, let $X = \NN \times \NN$, and let $V$ and $R$ be the partitions of $X$ into
columns and rows.  The family
$\{(B \times B) \setminus \Del_X \mid B \in \fq\}$, where
$\Del_X = \{(n,n) \mid n\}$ is the diagonal, has the finite intersection
property, Let $\fw$ be any ultrafilter on $X$ extending {this family}, and let
$\sigma\fw$ be its image under the transposition
$\sigma(m,n) = (n,m)$.  Then $\fw \neq \sigma\fw$, yet
\[
  \sigma\fw \;\in\; v^*(\fw) \cap r^*(\fw) 
  \qquad\text{while}\qquad
  (v \wedge r)^*(\fw) = \Del^{\,*}(\fw) = \{\fw\}.
\]
Hence $f^*(\fu) \cap g^*(\fu) \supsetneq (f \wedge g)^*(\fu)$ in general.
\end{proposition}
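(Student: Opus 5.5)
I will verify the four assertions of Proposition~\ref{prop:meet-vs-cap} in turn, each by a direct combinatorial check.

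\textbf{Finite intersection property.} Since $\fq$ is a filter, $(B_1 \times B_1) \cap (B_2 \times B_2) = (B_1\cap B_2)\times(B_1\cap B_2)$. So it suffices that each $(B\times B)\setminus\Del_X$ is non-empty for $B \in \fq$. This holds because $B$ is infinite, as $\fq$ is non-principal, and so contains two distinct elements $m \neq n$, giving $(m,n) \in (B\times B)\setminus\Del_X$. Because the family is closed under finite intersections, it has the finite intersection property, and $\fw$ exists.

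\textbf{Pushforwards.} I will compute $\beta v(\fw)$ and $\beta r(\fw)$, identifying columns and rows with $\NN$ via the projections $\pi_1,\pi_2$. For $B \in \fq$ we have $B\times B \in \fw$, so both $\pi_1[B\times B] = B$ and $\pi_2[B\times B]=B$ lie in the respective pushforwards. Hence $\beta\pi_1(\fw) = \beta\pi_2(\fw) = \fq$, since an ultrafilter containing $\fq$ equals $\fq$. Since $\pi_1\circ\sigma = \pi_2$, we get $\beta\pi_1(\sigma\fw) = \beta\pi_2(\fw) = \fq$, and symmetrically $\beta\pi_2(\sigma\fw) = \fq$. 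Thus $\sigma\fw$ agrees with $\fw$ on both $V$ and $R$. Also $\sigma\fw$ is non-principal, since $\fw$ is: $\fw$ contains the sets $(B\times B)\setminus \Del_X$, whose intersection over $B \in \fq$ is empty. Therefore $\sigma\fw \in v^*(\fw)\cap r^*(\fw)$.

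\textbf{The meet.} Every column meets every row in exactly one point, so $V \wedge R = \Del$. Hence $\sing(V\wedge R) = X \in \fw$, and Corollary~\ref{cor:singleton-fiber} gives $(v\wedge r)^*(\fw) = \{\fw\}$.

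\textbf{Distinctness, the only non-mechanical step.} I need $\fw \neq \sigma\fw$, and I will get it from a set $a\in\fw$ with $\sigma[a]\cap a=\emptyset$. The key observation is that $X\setminus\Del_X$ splits into the two disjoint halves $a_< = \{(m,n)\mid m<n\}$ and $a_> = \{(m,n) \mid m>n\}$, which $\sigma$ swaps. Since $\Del_X^c \in \fw$, exactly one half, say $a_<$, lies in $\fw$. Then $\sigma\fw$ contains $\sigma[a_<]=a_>$, which is disjoint from $a_<$, so $\sigma\fw\neq\fw$. The general statement $f^*(\fu)\cap g^*(\fu)\supsetneq (f\wedge g)^*(\fu)$ then follows by taking $\fu=\fw$, $F=V$, $G=R$, together with Proposition~\ref{prop:positive}(1) for the reverse inclusion.
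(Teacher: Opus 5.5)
Your proof is correct and follows the paper's argument step for step. It uses the same reduction of the finite intersection property to a single set $(B\times B)\setminus\Del_X$ with $B=\bigcap B_i\in\fq$, and the same pushforward computation via $v\circ\sigma=r$ and $r\circ\sigma=v$. It also uses the same splitting of $\Del_X^c$ into the two $\sigma$-swapped triangles to get $\fw\neq\sigma\fw$, and the same appeal to Corollary~\ref{cor:singleton-fiber} for $(v\wedge r)^*(\fw)=\{\fw\}$.
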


\begin{proof}
{For the} finite intersection property, {note that} $(B_1 \times B_1) \cap \cdots \cap
(B_k \times B_k) \setminus \Del_X = (B \times B) \setminus \Del_X$ with
$B = \bigcap B_i \in \fq$ {non-empty, since $\fq$ is non-principal}. %infinite, hence non-empty.  
{Let $\fw$ be an ultrafilter extending this family}. Then $\fw$  is non-principal: {if 
it were} principal at $(m,n)$, {we would have} %require
$m, n \in B$ for every $B \in \fq$, while $\bigcap \fq = \emptyset$.  
%For any such $\fw$, 
{We also have} 
$v^{-1}(B) = B \times \NN \supseteq (B \times B) \setminus \Del_X$ for
{every} $B \in \fq$, so $\beta v(\fw) \supseteq \fq$, whence
$\beta v(\fw) = \fq$; symmetrically $\beta r(\fw) = \fq$.  Since
$v \circ \sigma = r$ and $r \circ \sigma = v$, we get
$\beta v(\sigma\fw) = \beta r(\fw) = \fq = \beta v(\fw)$ and likewise for
$r$; so the (non-principal) $\sigma\fw$ agrees with $\fw$ on both $V$ and
$H$.  But $\fw \neq \sigma\fw$: with $u = \{(m,n) \mid m < n\}$ we have
$\Del_X^{\,c} = u \sqcup \sigma(u) \in \fw$, so exactly one of
$u, \sigma(u)$ lies in $\fw$; if $u \in \fw$ then, as $\sigma$ is an
involution, $u \in \sigma\fw$ would mean $\sigma(u) \in \fw$, {which is} impossible; and symmetrically if $\sigma(u) \in \fw$.  Finally
$V \wedge R = \Del$, and $\Del^*(\fw) = \{\fw\}$ by
Corollary~\ref{cor:singleton-fiber} (every cell of $\Del$ is a
singleton).
\end{proof}

Proposition~\ref{prop:meet-vs-cap} already wrecks the program:
the purported maximality of $\calK(\fF)$ among filters of closed sets, and
the separation principle ``$e \wedge g = \Del$ implies
$e^*(\fu) \cap g^*(\fu) = \{\fu\}$'' used to reconstruct $\fF$ from
$\calK$, both fail.  (Take $\fu = \fw$ as above and any type-II \upf{}
$\fF \ni V$ with heart $\fw$ --- one exists by
Proposition~\ref{prop:compat} provided $\fw$ contains no sub-section of
$V$, which can be arranged by adding to the generating family the
complements of all sub-sections of $V$: the finite intersection property
survives, since $(B \times B) \setminus \Del_X$ has infinitely many
points in each of infinitely many columns, while $k$ sub-sections of $V$
meet each column in at most $k$ points.)

{---- Skipping next remark for now (looks OK on a quick read, but should check reference....)} 

\begin{remark}
\label{rem:conditionR}
The configuration of Proposition~\ref{prop:meet-vs-cap} is tied to a
property of ultrafilters isolated in \cite[Section~3]{prrs1}: say that
$\fu$ satisfies \emph{condition (R)} if any two self-maps of $X$ with the
same pushforward along $\fu$ agree on a member of $\fu$.  The ultrafilter
$\fw$ above violates (R) as badly as possible: the two coordinate
projections (composed with a fixed bijection $\NN \to \NN \times \{0\}
\subseteq X$, say) have the same pushforward $\fq$, yet agree only on the
diagonal, which is not in $\fw$.  Pelant--Reiterman--R\"odl--Simon show
(R) to be strictly weaker than selectivity and equivalent to
$0$-proximal fineness of the associated atom \cite[Section~3]{prrs1}.
Whether Problem~\ref{prob:injective} below has a positive answer for
hearts satisfying (R) --- for which the pathology above cannot arise in
this form --- we do not know.
\end{remark}

One might
still hope that the \emph{family} of fibers, with its inclusion order,  remembers $\fF$.  The following containment criterion shows what fiber
inclusion means combinatorially.

\begin{lemma}[Containment criterion]
\label{lem:containment}
Let $F, G$ be partitions and $\fu \in \Xs$, with $\fp = \beta f(\fu)$
non-principal and $\fq = \beta g(\fu)$.  Then
$f^*(\fu) \subseteq g^*(\fu)$ if and only if for every $B \in \fq$ there
is $A \in \fp$ with $f^{-1}(A) \subseteq g^{-1}(B)$.
\end{lemma}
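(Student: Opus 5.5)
The statement is essentially the compactness description of fibers: since $\fp$ is non-principal, $K := f^*(\fu)$ is exactly $(\beta f)^{-1}(\fp)$, the set of all ultrafilters extending the filter $\fk$ generated by $\{f^{-1}(A) \mid A \in \fp\}$, and $\fk = \bigcap K$. Similarly $g^*(\fu) \supseteq \{\fv \in \Xs \mid \beta g(\fv) = \fq\}$, and membership of a non-principal $\fv$ in $g^*(\fu)$ means exactly $g^{-1}(B) \in \fv$ for every $B \in \fq$. So the inclusion $f^*(\fu) \subseteq g^*(\fu)$ should be equivalent to $g^{-1}(B) \in \fk$ for all $B \in \fq$. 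Since $\fp$ is closed under finite intersections and $f^{-1}$ preserves them, the sets $f^{-1}(A)$ with $A \in \fp$ form a base for $\fk$. Hence $g^{-1}(B) \in \fk$ iff $f^{-1}(A) \subseteq g^{-1}(B)$ for some $A \in \fp$.

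For the ``if'' direction, take $\fv \in f^*(\fu)$, so $\beta f(\fv) = \fp$. For each $A \in \fp$ this gives $f^{-1}(A) \in \fv$, hence $\fk \subseteq \fv$. Given $B \in \fq$, choose $A$ with $f^{-1}(A) \subseteq g^{-1}(B)$; then $g^{-1}(B) \in \fv$. Thus $\beta g(\fv) \supseteq \fq$, and since both are ultrafilters on $G$, $\beta g(\fv) = \fq$. As $\fv$ is non-principal, $\fv \in g^*(\fu)$.

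For ``only if'', suppose some $B \in \fq$ has $f^{-1}(A) \not\subseteq g^{-1}(B)$ for every $A \in \fp$. Then every $f^{-1}(A) \setminus g^{-1}(B) = f^{-1}(A) \cap g^{-1}(G \setminus B)$ is non-empty. Because the $f^{-1}(A)$ are closed under finite intersection, the family $\{f^{-1}(A) \mid A \in \fp\} \cup \{g^{-1}(G\setminus B)\}$ has the finite intersection property. Extend it to an ultrafilter $\fv$. Then $\fk \subseteq \fv$, so $\beta f(\fv) = \fp$. Since $\fp$ is non-principal, $\fv$ is non-principal by the remark preceding Lemma~\ref{lem:sections}. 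Hence $\fv \in f^*(\fu)$. But $G \setminus B \in \beta g(\fv)$, so $\beta g(\fv) \neq \fq$, and $\fv \notin g^*(\fu)$, contradicting the assumed inclusion.

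The only delicate point is the role of non-principality. It guarantees that the witness $\fv$ in the converse lies in $\Xs$, which is why the hypothesis on $\fp$ is needed. It is also why no hypothesis on $\fq$ is needed: membership in $g^*(\fu)$ is tested only on non-principal ultrafilters, which the first direction already supplies. Everything else is the routine filter-base bookkeeping above.
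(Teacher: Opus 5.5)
Your proof is correct and follows the paper's argument. The ``if'' direction is the same as the paper's. For ``only if'', the paper cites the fact that $\fk = \bigcap f^*(\fu)$ when $\fp$ is non-principal, while you prove that fact in place by extending $\{f^{-1}(A) \mid A \in \fp\} \cup \{g^{-1}(G \setminus B)\}$ to a non-principal ultrafilter; this is the same idea made explicit.
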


\begin{proof}
{($\Leftarrow$)  Here the hypothesis is the displayed combinatorial
condition, and the containment of fibers is what we prove.  Let
$\fv \in f^*(\fu)$; then $\fv$ extends $\fk_F$.  Given $B \in \fq$, the
hypothesis provides $A \in \fp$ with $f^{-1}(A) \subseteq g^{-1}(B)$;
since $f^{-1}(A) \in \fk_F \subseteq \fv$, also $g^{-1}(B) \in \fv$.
Thus $\beta g(\fv) \supseteq \fq$, whence
$\beta g(\fv) = \fq = \beta g(\fu)$ and $\fv \in g^*(\fu)$.} 
($\Rightarrow$)  As noted before Lemma~\ref{lem:sections}, when $\fp$ is
non-principal $\fk_F$ is the intersection of the ultrafilters in
$f^*(\fu)$.  If $f^*(\fu) \subseteq g^*(\fu)$, then every such ultrafilter
contains $g^{-1}(B)$ for each $B \in \fq$; hence
$g^{-1}(B) \in \fk_F$, which is the displayed condition.
\end{proof}

If membership in $\fF$ were readable from fibers, one would expect at
least this much \emph{absorption}: if $f \in \fF$ and
$g^*(\fu) \supseteq f^*(\fu)$, then $g \in \fF$.  Our main negative result
is that absorption fails, and fails as badly as possible: the larger fiber
can belong to a partition \emph{incompatible} with $\fF$.

\begin{theorem}
\label{thm:absorption-fails}
There exist an infinite set $X$, a type-II \upf{} $\fF$ on $\PP(X)$ with
heart $\fu$, and partitions $F \in \fF$ and $G \notin \fF$ such that
$F \wedge G = \Del$ (so that $G$ is excluded from \emph{every} \upf{}
containing $F$) and yet
\[
  g^*(\fu) \;\supsetneq\; f^*(\fu).
\]
\end{theorem}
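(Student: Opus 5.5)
The plan is to build $X$ so that $F$ and $G$ are transversal, yet $G$ is ``coarser in the limit'' along $\fu$. Lemma~\ref{lem:containment} tells us exactly what is needed. Put $\fp=\beta f(\fu)$ (to be non-principal) and $\fq=\beta g(\fu)$. We need every $g^{-1}(B)$ with $B\in\fq$ to contain $f^{-1}(A)$ for some $A\in\fp$.

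Since $F\wedge G=\Del$, each cell of $F$ meets each cell of $G$ at most once. So if $F$-cells are indexed by $j\in J$ and $G$-cells by $i\in I$, the cell $c_j$ corresponds bijectively to a set $S_j\subseteq I$. The condition then becomes: for every $B\in\fq$ there is $A\in\fp$ with $S_j\subseteq B$ for all $j\in A$.

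This cannot be done with $S_j$ forming a decreasing sequence or with bounded cells: an ultrafilter has no countable base, and the row-type constructions force $B$ to be cofinite. So I would index the $F$-cells by the members of $\fq$ themselves.

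The construction is as follows. Fix a non-principal ultrafilter $\fq$ on $\NN$ and let
\[
X=\{(b,n)\mid b\in\fq,\ n\in b\}.
\]
Let $F$ have cells $\{b\}\times b$ ($b\in\fq$), and let $G$ have cells $\{(b,n)\mid b\in\fq,\ n\in b\}$ ($n\in\NN$). Clearly $F\wedge G=\Del$, and every cell of either partition is infinite. The sets $\{b\in\fq\mid b\subseteq B\}$ for $B\in\fq$ have the finite intersection property, because
\[
\{b\subseteq B_1\}\cap\{b\subseteq B_2\}=\{b\subseteq B_1\cap B_2\}.
\]
So they extend to an ultrafilter $\fp$ on $\fq$ (viewed as the index set of $F$). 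This $\fp$ is non-principal, since taking $B=b_0\setminus\{\min b_0\}$ excludes any given $b_0$.

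Next choose $\fu\in\Xs$ extending $\fk_F=\{f^{-1}(A)\mid A\in\fp\}$ together with the complements of all sub-sections of $F$. The finite intersection property holds as in the remark after Proposition~\ref{prop:meet-vs-cap}: $f^{-1}(A)$ meets infinitely many cells in infinitely many points each, while $k$ sub-sections meet each cell in at most $k$ points. Then $\beta f(\fu)=\fp$. Moreover, for $B\in\fq$ we have
\[
g^{-1}(B)\supseteq f^{-1}(\{b\mid b\subseteq B\})\in\fu,
\]
so $\beta g(\fu)=\fq$, and Lemma~\ref{lem:containment} gives $f^*(\fu)\subseteq g^*(\fu)$.

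Since $\fu$ contains no sub-section of $F$, Proposition~\ref{prop:compat} yields a type-II \upf{} $\fF\ni F$ with heart $\fu$. Then $G\notin\fF$ because $F\wedge G=\Del$.

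For strictness, I would exhibit a point of $g^*(\fu)$ outside $f^*(\fu)$. Take the section $j=\{(\NN,n)\mid n\in\NN\}$ of $G$; it is a section because $\NN\in\fq$. By Lemma~\ref{lem:sections}, $\fk_G[j]\in g^*(\fu)$. But $j$ is the single $F$-cell $\{\NN\}\times\NN$, so $\beta f(\fk_G[j])$ is principal at that cell. Hence it differs from the non-principal $\fp$, and $\fk_G[j]\notin f^*(\fu)$.

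The main obstacle is the first step: finding a configuration in which a transversal partition's pushforward ultrafilter is \emph{generated} by images of $\fp$-large unions of $F$-cells. The remaining steps (choosing $\fu$, compatibility, and strictness) are routine applications of Lemma~\ref{lem:containment}, Proposition~\ref{prop:compat} and Lemma~\ref{lem:sections}.
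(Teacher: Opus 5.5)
Your proof is correct and follows the paper's construction in outline. In both, $X$ is a disjoint union of subsets of the index set of $G$; $F$ partitions by summand and $G$ by point; $\fp$ concentrates on the summands lying inside each $B\in\fq$; $\fu$ extends $\{f^{-1}(A)\mid A\in\fp\}$ together with the complements of sub-sections of $F$; and the containment comes from $g^{-1}(B)\supseteq f^{-1}(\{b\mid b\subseteq B\})$.

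The variation is in the indexing family. The paper uses the countable family $\Upsilon$ of non-empty finite subsets of $Y$. It therefore has to adjoin the sets $L_m$, so that $\fp$-almost all cells are large enough for the complements of sub-sections to be added. It then proves strictness by building an explicit $\fv$ that contains every $g^{-1}(B)$ but omits $f^{-1}(S_{B_0})$.

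You index by the members of $\fq$ themselves. This makes every $F$-cell infinite, so the finite intersection property is immediate. Also, since $\NN\in\fq$, the cell $\{\NN\}\times\NN$ is a section of $G$. Lemma~\ref{lem:sections} then supplies a point of $g^*(\fu)$ whose $f$-pushforward is principal, which gives a shorter strictness argument.

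The price is cardinality: your $X$ has size $|\fq|=2^{\aleph_0}$. The paper's $X$ is countable, so it shows the failure already for countable $X$. The theorem as stated asks only for some infinite $X$, so for this statement the difference costs nothing.

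One small point: you should say explicitly why $\fu$ is non-principal. It follows because singletons are sub-sections of $F$, or because $\fp$ is non-principal.
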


\begin{proof}
Fix a countably infinite set $Y$ and a non-principal ultrafilter $\fq$ on
$Y$.  {Letting $\Upsilon$ denote the set of nonempty finite subsets of $Y$, let}
% and {let $Y$ be the coproduct of the non-empty finite subsets of $X$. Concretely, 
%Let $\Upsilon$ be the collection of all nonempty 
%finite subsets of $Y$, and set 
%\[
$  X \;=\; \bigl\{(T, b) \mid b \in T \in \Upsilon \bigr\},$  
  %\in [Y]^{<\omega} \setminus \{\emptyset\},\;
  %b \in T \bigr\},
%\]
{the coproduct of $\Upsilon$. 
The surjections $f : X \to \Upsilon$ and $g : X \rightarrow Y$ be 
the surjections }
$f(T,b) = T$ and $g(T,b) = b$ {induce corresponding partitions} %and let 
$F$ and $G$, %be the {corresponding} partitions of $X$ into the fibers of $f$ and of $g$. %respectively. 
{and clearly, $F \wedge G = \Delta$. 
We will produce an ultrafilter $\fp$ on $\Upsilon$ 
and an ultrafilter $\fu$ on $X$ such that 
the fibre of $f$ through $\fu$ is properly 
contained in the fibre of $g$ through $\fu$.} 
%We identify the index set of $F$ with $\Upsilon$. 
%$I = [Y]^{<\omega} \setminus \{\emptyset\}$.  
%The cell of $F$ over $T \in \Upsilon$ I$ is $\{T\} \times T$, of cardinality $\card(T)$, and $g$ isinjective on {each such cell}; hence every cell of $F \wedge G$ is a singleton:$F \wedge G = \Del$.

For $B \subseteq Y$ and $m \geq 1$ put
\[
  S_B = \{T \in \Upsilon \mid T \subseteq B\}, \qquad
  L_m = \{T \in \Upsilon \mid \card(T) \geq m\}.
\]
The family $\{S_B \mid B \in \fq\} \cup \{L_m \mid m \geq 1\}$ has the finite intersection property: $S_{B_1} \cap \cdots \cap S_{B_k} \cap L_m$
contains every $m$-element subset of $B_1 \cap \cdots \cap B_k \in \fq$, which is infinite.  Let $\fp$ be an ultrafilter on $\Upsilon$ extending this
family; since the $L_m$ decrease to $\emptyset$, $\fp$ is non-principal.

%Next let $\fu$ be an ultrafilter on $X$ extending
{Now consider the family} 
\[
  \{f^{-1}(A) \mid A \in \fp\} \;\cup\;
  \{X \setminus a \mid a \text{ a sub-section of } F\}.
\]
This %family 
{also} has the finite intersection property: given $A \in \fp$ and
sub-sections $a_1, \ldots, a_k$ of $F$, choose $T \in A \cap L_{k+1}
\in \fp$; the cell over $T$ has more than $k$ elements while each $a_i$
meets it at most once, so
$f^{-1}(A) \setminus (a_1 \cup \cdots \cup a_k) \neq \emptyset$.  Let $\fu$ be any ultrafilter on $X$ 
extending this family. Then
$\beta f(\fu) \supseteq \fp$, whence $\beta f(\fu) = \fp$; and $\fu$ is non-principal (since every singleton is a sub-section of $F$, so $\fu$
contains its complement).  By construction, no member of $\fu$ is a sub-section of $F$, so by Proposition~\ref{prop:compat} there
is a type-II \upf{} $\fF \ni F$ with heart $\fu$.  Since $F \wedge G = \Del$ and $\fF$ is proper, $G \notin \fF$.

It remains to compare the fibers.  First,
\[g^{-1}(B) = \{(T,b) \mid b \in B \cap T\} \supseteq
\{(T,b) \mid T \subseteq B,\ b \in T\} = f^{-1}(S_B)\] for every
$B \subseteq Y$.  For $B \in \fq$ we have $S_B \in \fp$, so
$f^{-1}(S_B) \in \fu$ and hence $B \in \beta g(\fu)$; thus $\beta g(\fu) = \fq$.  Now if $\fv \in f^*(\fu)$, then $\fv$ extends $\fk_F$, hence contains $f^{-1}(S_B)$, and so, {by the above, contains} %hence 
$g^{-1}(B)$, for every
$B \in \fq$; so $\beta g(\fv) = \fq = \beta g(\fu)$ and (as $\fv \in
\Xs$) $\fv \in g^*(\fu)$.  Thus $g^*(\fu) \supseteq f^*(\fu)$. 

Finally, the containment is strict.  Fix $B_0 \in \fq$ with $B_0 \neq Y$ and a point $b_0 \in Y \setminus B_0$.  The family
$\{g^{-1}(B) \mid B \in \fq\} \cup \{X \setminus f^{-1}(S_{B_0})\}$ has
the finite intersection property. 

{(The sets $g^{-1}(B)$ are closed under finite
intersections, so a finite subfamily reduces to a single $g^{-1}(B)$
together with the new set.)} {For this, it is enough to show that 
each set $g^{-1}(B)$ 
%note that the sets $g^{-1}(B)$ are closed under finite intersections, so it suffices to {show they} intersect %one of them with the
intersects $X \setminus f^{-1}(S_{B_0})$. %new set. 
%For $B \in \fq$ choose 
Choose $b \in B \setminus \{b_0\}$ and let} $T = \{b_0, b\}$: then $(T, b) \in g^{-1}(B)$ while $T \not\subseteq B_0$, so $(T,b) \notin f^{-1}(S_{B_0})$.  
An ultrafilter $\fv$ extending
this family is non-principal (it contains $g^{-1}(Y \setminus \{b\})$ for every $b$, since $\fq$ is non-principal, {and the intersection of these sets is empty) and} satisfies $\beta g(\fv) = \fq$, so $\fv \in g^*(\fu)$; but
$f^{-1}(S_{B_0}) \notin \fv$, {while} $S_{B_0} \in \fp$, so $\beta f(\fv) \neq \fp$ and {thus} $\fv \notin f^*(\fu)$.
\end{proof}

{ Theorem~\ref{thm:absorption-fails} shows that  the filter of closed sets generated by $\calK(\fF)$ 
cannot detect membership in $\fF$: retaining the 
notation of the foregoing proof, the filter generated 
by $\calK(\fF)$ contains $g^{\ast}(\fu)$ for a 
partition $G$ incompatible with $\fF$. In Section 
\ref{sec:ultrapower}, we will provide an alternative 
representation of type-II parultrafilters with heart $\fu$ in terms of the maximal proper substructures of the ultrapower $X^{X}\!/\fu$. However, before 
turning to that, we raise two questions. }

\begin{problem}
\label{prob:injective}
Is the assignment $\fF \mapsto \calK(\fF)$ injective on type-II \upf{}s
with a common heart $\fu$?  Equivalently: if $g^*(\fu) = f^*(\fu)$ with
$f \in \fF$, must $g \in \fF$?  
The case where $\beta g(\fu)$ is principal is
trivial, since then a cell of $G$ lies in $\fu$ and
$g \in \langle\hat\fu\rangle \subseteq \fF$; the substance of the question lies in the non-principal case.
\end{problem}

\begin{problem}
\label{prob:representation}
{Find a \emph{topological} representation of 
type-II ultrafilters with a given heart $\fu$ (or of the  corresponding substructures of $X^{X}\!/\fu$), either in terms of maximal families of closed subsets  of $\Xs$, or in other intrinsically topological terms. }
%In particular, Section~\ref{sec:ultrapower} identifies the type-II \upf{}s with heart $\fu$ with the maximal proper substructures of the ultrapower $X^X\!/\fu$; it remains open whether these objects also admit a workable description as maximal families of closed subsets of $\Xs$, or in other intrinsically topological terms.  (Proposition~\ref{prop:compat} shows the partitions available to $\fF$ are those without a sub-section in $\fu$; the problem is to describe which maximal families of suchpartitions arise.)
\end{problem}

% ------------------------------------------------------------------
\section{Type-II \Upf{}s and Ultrapowers}
\label{sec:ultrapower}
% ------------------------------------------------------------------

The representation that Section~\ref{sec:representation} shows cannot
proceed through the fibers turns out to be available in model-theoretic terms.  
For $X = \omega$ this was pointed out by the referee of \cite{prrs2} (see the remark opening Section~4 there), with reference to Keisler's survey \cite{keisler}; we give a direct partition-lattice proof, valid for every infinite $X$.

Fix a non-principal ultrafilter $\fu$ on $X$, and let $M = X^X\!/\fu$ denote the set of {equivalence} classes $[g]$ of maps $g \colon X \to X$ under the
relation of agreement on a member of~$\fu$.  {We 
identify $X$ with the set of constant classes, and}
%We 
regard $X$ as the \emph{full structure}: a first-order structure with a name for every finitary operation $h \colon X^n \to X$ and {for} every {
$n$-ary relation $R \subseteq X^{n}$.}
% {$n$-ary [?]} relation on $X$.  
Then $M$ is %its 
the ultrapower {of this structure} modulo $\fu$, with the operations acting by $h^M([g_1], \ldots, [g_n]) = [h \circ (g_1, \ldots, g_n)]$ (well defined,
as agreement on members of $\fu$ is preserved).  By a
\emph{substructure} of $M$ we mean a non-empty subset closed under every
operation $h^M$. {Applying constant functions, we see that every substructure contains a copy of $X$.} 
%(apply constant functions(applying constant operations shows that every substructure contains every constant class.  
Since Skolem functions for the full structure are among its named operations, the Tarski--Vaught
test shows {that} every substructure of $M$ 
{is} %to be 
an elementary submodel; we shall not need this fact, but it explains the model-theoretic reading of Theorem~\ref{thm:ultrapower} below.

For $g \colon X \to X$ let $G_g$ denote the partition of $X$ into the
non-empty fibers of $g$. {Every partition $F$ arises as $G_g$ for some
$g$; in this case, we call}
%(compose the canonical surjection with a choice of representatives); we call such a 
$g$ a \emph{realization} of $F$.  Call a class $[g]$
\emph{$\fu$-injective} if $g$ is injective on some member of $\fu${, noting that this} 
depends only on the class.

\begin{lemma}
\label{lem:generators}
For $[g] \in M$ the following are equivalent:
\begin{enumerate}[label=\emph{(\roman*)}]
  \item $[g]$ is $\fu$-injective;
  \item the substructure generated by $[g]$ is all of $M$.
\end{enumerate}
Consequently, a substructure of $M$ is proper if and only if it contains
no $\fu$-injective class.
\end{lemma}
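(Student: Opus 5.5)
The argument uses the fact that every finitary operation on $X$ is named in the full structure, so the substructure generated by a set $S \subseteq M$ is simply $\{[h\circ(g_1,\dots,g_n)] \mid h\colon X^n\to X,\ [g_i]\in S\}$. This set contains the constants (take $n=0$, or a constant unary $h$) and is closed under all operations. In particular, the substructure generated by a single class $[g]$ is $\{[h\circ g] \mid h\colon X\to X\}$, since any $h\circ(g,\dots,g)$ is a unary function of $g$.

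For (i)$\Rightarrow$(ii), suppose $g$ is injective on $a\in\fu$. Given an arbitrary $[k]\in M$, define $h\colon X\to X$ on $g[a]$ by $h(g(x))=k(x)$ for $x\in a$; this is well defined by injectivity of $g$ on $a$. Extend $h$ arbitrarily off $g[a]$. Then $h\circ g$ agrees with $k$ on $a\in\fu$, so $[k]=h^M([g])$ lies in the substructure generated by $[g]$.

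For (ii)$\Rightarrow$(i), suppose $[\mathrm{id}_X]=[h\circ g]$ for some $h$. Then $h\circ g=\mathrm{id}$ on some $a\in\fu$, so $g$ is injective on $a$.

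For the consequence, observe that $\fu$-injectivity is class-invariant. If a substructure contains a $\fu$-injective class, then by (i)$\Rightarrow$(ii) it contains the substructure generated by that class, which is all of $M$. Conversely, if a substructure $N$ equals $M$, then it contains $[\mathrm{id}_X]$, and $\mathrm{id}_X$ is injective on $X\in\fu$. So a substructure is proper exactly when it contains no $\fu$-injective class.

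The only delicate point is the description of generated substructures via unary functions. This depends on the full structure naming every operation, including the constants, so no step presents a real obstacle.
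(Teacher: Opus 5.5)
Your proof is correct and follows essentially the same route as the paper. Both describe the generated substructure as $\{[h\circ g]\}$, build a left inverse of $g$ on $g[a]$ for (i)$\Rightarrow$(ii), and obtain injectivity for the converse from $[\mathrm{id}]=[h\circ g]$. You also spell out the ``consequently'' clause, which the paper leaves implicit.
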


\begin{proof}
The substructure generated by $[g]$ is $\{[k \circ g] \mid k \colon X^{n}
\to X, ~{n \in \NN}\}$: indeed $h \circ (g, \ldots, g) = (h \circ \Delta_n) \circ g$,
where $\Delta_n$ is the diagonal map $x \mapsto (x, \ldots, x)$.  If $g$
is injective on $a \in \fu$, then given any $h \colon X \to X$ define
$k$ on $g[a]$ by $k(g(x)) = h(x)$ and arbitrarily elsewhere; then
$k \circ g = h$ on $a$, so $[h] = [k \circ g]$.  As every element of $M$
is the class of a self-map, this proves (ii).  Conversely, if (ii)
holds, then $[\mathrm{id}] = [k \circ g]$ for some $k$, so
$\mathrm{id} = k \circ g$ on some $a \in \fu$, forcing $g$ to be
injective on~$a$.
\end{proof}

\begin{lemma}
\label{lem:welldef}
\begin{enumerate}[label=\emph{(\arabic*)}]
  \item If {$[g] = [g']$,} 
  %$g = g'$ on a member of $\fu$, 
  then for every \pf{} $\fF$
    with $\hat\fu \subseteq \fF$:\; $G_g \in \fF$ iff $G_{g'} \in \fF$.
  \item If $G_g = G_{g'}$, then for every substructure $S$ of $M$,\;
    $[g] \in S$ iff $[g'] \in S$.
\end{enumerate}
\end{lemma}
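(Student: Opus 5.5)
For part (1), the plan is to show directly that $G_g$ and $G_{g'}$ have a common refinement lying in $\fF$, namely a meet with some $\hat a$. Suppose $g = g'$ on $a \in \fu$. Then $\hat a \in \fF$, and the key claim is
\[
  G_g \wedge \hat a \;=\; G_{g'} \wedge \hat a .
\]
To verify it, look at the cells of each side. The cells of $G_g \wedge \hat a$ are the sets $g^{-1}(y) \cap a$ (those that are non-empty) together with the singletons $\{x\}$ for $x \notin a$. Since $g$ and $g'$ agree on $a$, we have $g^{-1}(y) \cap a = g'^{-1}(y) \cap a$, so both partitions have the same cells. Now assume $G_g \in \fF$. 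Then $G_g \wedge \hat a \in \fF$, so $G_{g'} \wedge \hat a \in \fF$, and upward closure gives $G_{g'} \in \fF$. The reverse direction is the same argument with the roles of $g$ and $g'$ swapped.

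For part (2), the plan is to express $g'$ as $k \circ g$ for a unary self-map $k$ and then use closure of $S$ under the operations $k^M$. Since $g$ and $g'$ have the same fibers, $g(x) = g(x')$ holds iff $g'(x) = g'(x')$. Define $k$ on $g[X]$ by $k(g(x)) = g'(x)$; this is well defined because equal values of $g$ force equal values of $g'$. Extend $k$ arbitrarily to the rest of $X$. Then $g' = k \circ g$ everywhere, so $[g'] = k^M([g])$. Since $S$ is closed under the unary operation $k^M$, $[g] \in S$ implies $[g'] \in S$. Symmetry gives the converse.

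Neither part has a real obstacle. The only point needing care is in (1): one must check that the cell-by-cell description of the meet with $\hat a$ is correct, namely that intersecting with the single big cell $a$ and keeping singletons outside $a$ gives exactly the meet. This follows directly from the definition of $\hat a$ and of meets in $\PP(X)$.
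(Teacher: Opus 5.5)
Your proposal is correct and follows the paper's proof essentially step for step. In (1) you use the identity $G_g \wedge \hat a = G_{g'} \wedge \hat a$ for an $a \in \fu$ on which $g$ and $g'$ agree, together with $\hat a \in \fF$, closure under meets and upward closure; in (2) you factor $g' = k \circ g$ through a self-map defined on the image of $g$ and use closure of $S$ under $k^M$, which is exactly the paper's argument with its map $\tau$.
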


\begin{proof}
(1) Say $g = g'$ on $a \in \fu$.  The cells of $G_g \wedge \hat a$ are
the non-empty fibers of $g{\restriction}a$ together with singletons drawn
from $a^c$; hence $G_g \wedge \hat a = G_{g'} \wedge \hat a$.  Since
$\hat a \in \fF$, we have $F \in \fF$ iff $F \wedge \hat a \in \fF$, for
every partition $F$ (upward closure one way, closure under meets the
other). Apply this to $F = G_g$ and to $F = G_{g'}$.

(2) If $g, g'$ have the same fibers, {let $\tau_o : \operatorname{im}(g) \rightarrow \operatorname{im}(g')$ be the bijection taking $y \in \operatorname{im}(g)$ to the unique element of $g'(g^{-1}(y))$)} 
%the rule $\tau_0(g(x)) := g'(x)$ well-defines a bijection from $\operatorname{im} g$ onto$\operatorname{im} g'$; 
Extending $\tau_0$ arbitrarily to
$\tau \colon X \to X$ gives $g' = \tau \circ g$, whence
{if $[g] \in S$,} $[g'] = \tau^M([g]) \in S$. 
%{and symmetrically for the reverse inclusion.} %The converse is symmetric.
\end{proof}

\begin{theorem}
\label{thm:ultrapower}
Let $\fu$ be a non-principal ultrafilter on the infinite set $X$.  The
assignments
\begin{align*}
  S(\fF) &\;=\; \bigl\{[g] \in M \mid G_g \in \fF\bigr\},\\
  \fF(S) &\;=\; \bigl\{F \in \PP(X) \mid [g] \in S
    \text{ for some (equivalently, every) realization $g$ of } F\bigr\}
\end{align*}
are mutually inverse bijections between the type-II \upf{}s $\fF$ with
heart $\fu$ and the maximal proper substructures $S$ of
$M = X^X\!/\fu$.
\end{theorem}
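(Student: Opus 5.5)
The plan is to show that $S(\cdot)$ and $\fF(\cdot)$ are well-defined, order-preserving maps between pre-type-II objects, and then deduce the bijection between maximal objects from a Galois-type correspondence. Lemma~\ref{lem:welldef}(1) makes $S(\fF)$ independent of representatives, and Lemma~\ref{lem:welldef}(2) justifies the ``equivalently, every'' in $\fF(S)$. The dictionary we use is that $G_{k\circ g} \geq G_g$ for every $k$. Conversely, if $F \geq G_g$ then $F = G_{k\circ g}$ for a suitable $k$. Moreover $G_g \wedge G_{g'} = G_{(g,g')}$, and $(g,g')$ is realized as $p\circ(g,g')$ with $p\colon X^2\to X$ injective on the image, since $X$ is infinite.

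For a parfilter $\fF \supseteq \hat\fu$ (in particular a type-II \upf{} with heart $\fu$), I will check that $S(\fF)$ is a substructure. It is non-empty, since constants realize $\{X\}\in\fF$. For closure, take $[g_1],\dots,[g_n]\in S(\fF)$. Then $G_{h\circ(g_1,\dots,g_n)} \geq G_{g_1}\wedge\dots\wedge G_{g_n} \in \fF$.

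Properness of $S(\fF)$ uses Lemma~\ref{lem:generators}. If $g$ is injective on $a\in\fu$, then $G_g\wedge\hat a=\Del$, which is impossible in $\fF$.

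In the other direction, for a proper substructure $S$, the family $\fF(S)$ is upward closed and meet-closed by the same dictionary. It contains $\hat a$ for every $a\in\fu$: a realization of $\hat a$ is constant on $a$, so it is $\fu$-equivalent to a constant, and constants lie in $S$. It also omits $\Del$, because realizations of $\Del$ are injective, contradicting Lemma~\ref{lem:generators}. Hence $\fF(S)$ is a proper parfilter containing $\hat\fu$.

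Both maps are monotone. The composites satisfy $\fF(S(\fF))=\fF$ for every parfilter $\fF\supseteq\hat\fu$, and $S(\fF(S))=S$ for every substructure $S$, directly from the definitions together with Lemma~\ref{lem:welldef}. Thus we have an order isomorphism between parfilters containing $\hat\fu$ and proper substructures of $M$. It restricts to an isomorphism of maximal elements.

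The remaining point, which I expect to be the main obstacle, is matching ``maximal'' on both sides. A maximal proper substructure corresponds to a parfilter containing $\hat\fu$ that is maximal among proper parfilters containing $\hat\fu$. That is the same as an \upf{} containing $\hat\fu$, because every proper parfilter extending such a parfilter still contains $\hat\fu$.

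It then suffices to prove that the \upf{}s containing $\hat\fu$ are exactly the type-II \upf{}s with heart $\fu$. If $\fF\supseteq\hat\fu$ then $\fF$ is type~II (as observed before Theorem~\ref{thm:heart}) and $h(\fF)\supseteq\fu$. By Theorem~\ref{thm:heart}, $h(\fF)$ is an ultrafilter, so $h(\fF)=\fu$. Conversely, heart $\fu$ means exactly $\hat\fu\subseteq\fF$.

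The only care needed is that the substructure generated by elements really is $\{[k\circ(g_1,\dots,g_n)]\}$, and that $S$ need not contain $\hat\fu$-information beyond the constants. Both of these were handled in Lemma~\ref{lem:generators}.
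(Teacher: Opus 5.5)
Your proof is correct, but it is organized differently from the paper's.

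\textbf{The paper's route.} The paper establishes four facts separately:
\begin{enumerate}
\item $S(\fF)$ is a proper substructure.
\item $S(\fF)$ is maximal. It uses the maximality criterion of Section~1: some $E\in\fF$ has $E\wedge G_g=\Del$, which makes $p\circ(e,g)$ injective.
\item $\fF(S)$ is a proper parfilter whose heart is exactly $\fu$.
\item $\fF(S)$ is maximal. It writes $[\mathrm{id}]$ as one operation applied to classes in $S$ and to $[g]$, exhibiting some $a\in\fu$ as a sub-section of $E_1\wedge\cdots\wedge E_m\wedge G$.
\end{enumerate}
The inversion identities come only at the end.

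\textbf{Your route.} You prove monotonicity and the identities $\fF(S(\fF))=\fF$ and $S(\fF(S))=S$ first. These hold on the whole poset of parfilters containing $\hat\fu$ and the whole poset of proper substructures of $M$. The two maps are therefore an order isomorphism between these posets, and maximality on both sides follows at once. The only further input is that parultrafilters containing $\hat\fu$ are exactly the type-II parultrafilters with heart $\fu$. Theorem~\ref{thm:heart} is not even needed for this, since $h(\fF)$ is a proper filter containing the ultrafilter $\fu$.

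\textbf{What each buys.} Your route is shorter and proves more. It gives an isomorphism of the entire interval of parfilters above $\langle\hat\fu\rangle$ with the poset of proper substructures. This makes Corollary~\ref{cor:diagonal} immediate, because the constant classes correspond to the least element $\langle\hat\fu\rangle$. It dispenses with the maximality criterion, the pairing-injectivity argument, the description of the substructure generated by $S\cup\{[g]\}$, and the heart computation for $\fF(S)$. Your closing remark about the generated substructure is therefore superfluous: you only use the trivial half of Lemma~\ref{lem:generators}. Your argument is a Zorn-free strengthening of the alternative proof of maximality of $\fF(S)$ mentioned in the paper's Verification section. The paper's direct arguments, in exchange, make visible how the two maximality conditions translate: incompatibility $E\wedge G=\Del$ on the partition side becomes generation of a $\fu$-injective class on the ultrapower side.
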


\begin{proof}
Both assignments are well defined by Lemma~\ref{lem:welldef} (part (1)
applies since $\fu$ is the heart of $\fF$; part (2) gives the
parenthetical equivalence).

\emph{$S(\fF)$ is a proper substructure.}  If $[g_1], \ldots, [g_n] \in
S(\fF)$ and $h \colon X^n \to X$, put $g = h \circ (g_1, \ldots, g_n)$;
two points identified by every $g_i$ are identified by $g$, so
$G_g \geq G_{g_1} \wedge \cdots \wedge G_{g_n} \in \fF$ and {thus} 
$[g] \in S(\fF)$.  %Properness: 
{To see that $S(\fF)$ is proper, note 
that} if {$S(\fF) = M$, then by Lemma~\ref{lem:generators}, some} $[g] \in S(\fF)$ 
{would be} %were $\fu$-injective, with $g$ 
injective on some $a \in \fu$. {But} then $a$ would be a sub-section of
$G_g$, so $\hat a \wedge G_g = \Del$ with both meetands in $\fF$, {which is impossible}. 
% --- impossible.  Now apply Lemma~\ref{lem:generators}.

\emph{$S(\fF)$ is maximal.}  Let $S'$ be a substructure with $S' \supsetneq S(\fF)$, and pick $[g] \in S' \setminus S(\fF)$.  Then $G_g \notin \fF$, so by maximality of $\fF$ there is {some} $E \in \fF$ with $E \wedge G_g = \Del$.  Fix a realization $e$ of $E$ (so $[e] \in S(\fF) \subseteq S'$) and a pairing bijection $p \colon X \times X \to X$ (one exists, $X$ being infinite).  Since $E \wedge G_g = \Del$, the map
$(e, g) {: X \to X \times X}$ is injective, hence so is $p \circ (e, g)$; and
$[p \circ (e, g)] = p^M([e], [g]) \in S'$.  By
Lemma~\ref{lem:generators}, $S' = M$.  Thus no proper substructure strictly contains $S(\fF)$.

\emph{$\fF(S)$ is a proper \pf{} extending $\hat\fu$.}  Upward closure: 
if $F \leq F'$ and $g$ realizes $F$, choose for each cell of $F'$ a
representative and let $\sigma$ send each value $g(x)$ to the
representative of the $F'$-cell of $x$ (well defined since $F$ refines
$F'$, and extended arbitrarily off $\operatorname{im} g$); then
$\sigma \circ g$ realizes $F'$ and
$[\sigma \circ g] = \sigma^M([g]) \in S$.  Meets: if $g_F, g_G$ realize
$F, G$, then $p \circ (g_F, g_G)$  realizes $F \wedge G$, and its class lies in $S$.  Properness: a realization of $\Del$ is injective, and $S$,
being proper, contains no $\fu$-injective class
(Lemma~\ref{lem:generators}).  Non-emptiness: constants realize 
$\{X\}$.  The heart: for $b \in \fu$, {any} %the
map fixing $X \setminus b$
pointwise and collapsing $b$ to a point of $b$ realizes $\hat b$ and is
$\fu$-almost-everywhere constant, so its class is a constant class and lies in $S$; hence $\hat b \in \fF(S)$.  For $b \notin \fu$, the same map
is $\fu$-almost-everywhere the identity, so $\hat b \in \fF(S)$ would
put $[\mathrm{id}]$ in $S$, {contradicting 
the properness of $S$.}  Thus $\{a \mid \hat a \in \fF(S)\} = \fu$, and since every finite partition has a cell in $\fu$, $\fF(S)$ contains
$\PPo(X)$.

\emph{$\fF(S)$ is maximal} (hence a type-II \upf{} with heart $\fu$, by the preceding paragraph).  Suppose $G \notin \fF(S)$, with realization
$g$, so $[g] \notin S$.  The substructure generated by $S \cup \{[g]\}$
consists of the classes $h^M([e_1], \ldots, [e_m], [g], \ldots, [g])$ with $[e_1], \ldots, [e_m] \in S$ (a single application of an operation suffices, since
compositions of operations are operations); by maximality of $S$ it is
improper, hence equals $M$ and contains $[\mathrm{id}]$:
\[
  \mathrm{id} \;=\; h \circ (e_1, \ldots, e_m, g, \ldots, g)
  \quad\text{on some } a \in \fu,
\]
with $[e_i] \in S$ (possibly $m = 0$).  On $a$, then, the point $x$ is
determined by the values $e_1(x), \ldots, e_m(x), g(x)$; that is, $a$ is
a sub-section of $E_1 \wedge \cdots \wedge E_m \wedge G$, where
$E_i := G_{e_i} \in \fF(S)$.  Consequently
$(\hat a \wedge E_1 \wedge \cdots \wedge E_m) \wedge G = \Del$, and the
left meetand belongs to $\fF(S)$.  By the maximality criterion of
Section~1, $\fF(S)$ is an \upf.

\emph{Mutual inverses.}  $\fF(S(\fF)) = \fF$ is immediate from the
definitions, and $S(\fF(S)) = S$ follows from
Lemma~\ref{lem:welldef}(2).
\end{proof}

\begin{corollary}
\label{cor:diagonal}
%The constant classes --- 
The canonical (diagonal) copy of $X$ in its
ultrapower % --- form 
{is} a maximal proper substructure of $X^X\!/\fu$ if and
only if $\fu$ is minimal in the Rudin--Keisler order.
\end{corollary}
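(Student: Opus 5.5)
The plan is to combine Theorem~\ref{thm:ultrapower} with Theorem~\ref{thm:minimal}. Let $D \subseteq M$ be the set of constant classes, i.e.\ the diagonal copy of $X$. We show that $D$ corresponds, under the bijection of Theorem~\ref{thm:ultrapower}, to the \pf{} $\langle\hat\fu\rangle$.

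First, $D$ is a substructure. Indeed, $h^M$ applied to constant classes gives the constant class of $h$ evaluated at those constants. It is proper, because $[\mathrm{id}] \notin D$: since $\fu$ is non-principal, $\mathrm{id}$ agrees with no constant on a member of $\fu$.

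Next we compute $\{F \mid$ some realization of $F$ lies in $D\}$. A class $[g]$ is constant iff $g$ is constant on some $a \in \fu$. In that case $a$ lies inside a single cell of $G_g$, so some cell of $G_g$ belongs to $\fu$. Conversely, if a cell $c$ of $F$ lies in $\fu$, then any realization of $F$ is constant on $c$, so its class is constant. By observation (i) in the proof of Theorem~\ref{thm:minimal}, "some cell of $F$ lies in $\fu$" is equivalent to $F \in \langle\hat\fu\rangle$. Hence $D = \{[g] \mid G_g \in \langle\hat\fu\rangle\}$.

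For the forward direction, suppose $\fu$ is minimal. By Theorem~\ref{thm:minimal}, $\langle\hat\fu\rangle$ is an \upf. It contains $\hat\fu \supseteq$ all finite partitions, so it is of type II, and its heart is $\fu$, as computed before Theorem~\ref{thm:heart}. Theorem~\ref{thm:ultrapower} then says $S(\langle\hat\fu\rangle) = D$ is a maximal proper substructure.

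For the converse, suppose $D$ is a maximal proper substructure. By Theorem~\ref{thm:ultrapower}, $\fF(D)$ is a type-II \upf{} with heart $\fu$. By the computation above, $\fF(D) = \langle\hat\fu\rangle$. So $\langle\hat\fu\rangle$ is an \upf, and Theorem~\ref{thm:minimal} gives that $\fu$ is minimal.

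The only step needing care is the identification of $D$ with $S(\langle\hat\fu\rangle)$. It rests on checking that constancy on a $\fu$-set is the same as having a cell in $\fu$, and that this property does not depend on the choice of realization, which Lemma~\ref{lem:welldef}(2) guarantees. Everything else is bookkeeping.
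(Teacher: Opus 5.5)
Your proof is correct and matches the paper's argument: you identify the constant classes with $S(\langle\hat\fu\rangle)$ via observation (i) in the proof of Theorem~\ref{thm:minimal}, then transfer maximality in both directions through Theorems~\ref{thm:ultrapower} and~\ref{thm:minimal}. One cosmetic slip: it is $\langle\hat\fu\rangle$, not $\hat\fu$ itself, that contains all finite partitions.
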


\begin{proof}
The formula defining $S(\cdot)$ makes sense for any \pf{} extending
$\hat\fu$, and for $\langle\hat\fu\rangle$ it yields exactly the constant
classes: $G_g \in \langle\hat\fu\rangle$ iff some cell of $G_g$ lies in
$\fu$ (observation (i) in the proof of Theorem~\ref{thm:minimal}) iff
$g$ is constant on a member of $\fu$ iff $[g]$ is a constant class.  If
the constants form a maximal proper substructure $S$, then
$\fF(S) = \langle\hat\fu\rangle$ by the same computation, so
$\langle\hat\fu\rangle$ is an \upf{} by
Theorem~\ref{thm:ultrapower}, and $\fu$ is minimal by
Theorem~\ref{thm:minimal}.  Conversely, if $\fu$ is minimal, then
$\langle\hat\fu\rangle$ is an \upf{} and
$S(\langle\hat\fu\rangle)$ --- the constants --- is a maximal proper
substructure.
\end{proof}

\begin{remark}
{(i)} Under this correspondence the questions of
Section~\ref{sec:representation} take algebraic form:
Problem~\ref{prob:injective} asks whether the closed family $\calK(\fF)$ determines the substructure $S(\fF)$.  

{(ii)} 
Counting type-II
\upf{}s with a given heart is {equivalent to} counting maximal proper substructures of
the ultrapower. The constructions of \cite[Section~4]{prrs2} provide,
under CH, hearts %realizing any finite number, 
{arising from any finite number, and 
from $2^{\mathfrak c}$, distinct type-II 
parultrafilters.} 
%and $2^{\mathfrak c}$ realizations}
(cf.\ Remark~\ref{rem:baumgartner}). 
\end{remark}

\subsection*{Verification}
The results of this paper have been formalized and machine-checked in
Lean~4 over the Mathlib library, in a development produced with the
Aristotle theorem-proving system; it is archived at
\texttt{https://doi.org/10.5281/zenodo.21876810}.  Partitions are modelled as
\texttt{Setoid}s under Mathlib's refinement order, and the ultrapower of
Section~\ref{sec:ultrapower} as a space of germs.  Every theorem,
proposition, corollary and lemma above is verified from the standard
foundations (propositional extensionality, choice, quotient soundness),
with two classical inputs assumed as explicitly stated, unproved lemmas
--- the Rudin--Keisler criterion quoted in
Section~\ref{sec:typeII-RK} \cite{comfort-negrepontis}, and the
Frayne--Morel--Scott theorem \cite{fms}, the latter consumed in the
specialized form that the isolated-point set of a fiber in case (c) of
Theorem~\ref{thm:trichotomy} is uncountable when $X$ is countable ---
and with the following provisos.  The content of
Lemmas~\ref{lem:sections}--\ref{lem:density} and
Proposition~\ref{prop:threefaces} is carried by the development's
section-ultrafilter lemmas, packaged differently there, and the
description of $T$ as an ultraproduct in
Proposition~\ref{prop:threefaces} is not formalized; the final
strict-containment assertion of Proposition~\ref{prop:sparse-not-fiber}
is proved only in the text, the formalized statement being the
existential one; and Example~\ref{ex:fubini}, like the assertions
recorded without proof in Remark~\ref{rem:baumgartner}, lies outside
the formalization.  Two
observations of independent interest emerged from the formalization.
First, the maximality of $\fF(S)$ in Theorem~\ref{thm:ultrapower}
admits a proof different from the one given above: extend $\fF(S)$ by
Zorn's Lemma to an \upf{} $\mathfrak{G}$, identify $h(\mathfrak{G}) = \fu$ directly, and
deduce from the maximality of $S(\mathfrak{G})$ together with the two inversion
identities that $\mathfrak{G} = \fF(S)$.  Second, the conclusion of
Proposition~\ref{prop:sparse-not-fiber} requires only the
isolated-point count; the strict containment
$\overline S \subsetneq K_V$ established in its proof is not needed for
the stated result, though it retains independent interest as exhibiting
sparse proper subsets of fibers.

\subsection*{Acknowledgment}
We thank James Baumgartner for the observation recorded in
Remark~\ref{rem:baumgartner}.


\begin{thebibliography}{99}
\bibitem{booth}
  D.\ Booth,
  \textit{Ultrafilters on a countable set}.
  Ann.\ Math.\ Logic \textbf{2} (1970/71), 1--24.

\bibitem{chang-keisler}
  C.\,C.\ Chang and H.\,J.\ Keisler,
  \textit{Model Theory}, 3rd ed.
  North-Holland, Amsterdam, 1990.

\bibitem{comfort-negrepontis}
  W.\,W.\ Comfort and S.\ Negrepontis,
  \textit{The Theory of Ultrafilters}.
  Springer-Verlag, Berlin, 1974.

\bibitem{dejager-kunzi}
  E.\,P.\ de Jager and H.-P.\,A.\ K\"unzi,
  \textit{Atoms, anti-atoms and complements in the lattice of
  quasi-uniformities}.
  Topology Appl.\ \textbf{153} (2006), 3140--3156.

\bibitem{fms}
  T.\ Frayne, A.\,C.\ Morel and D.\,S.\ Scott,
  \textit{Reduced direct products}.
  Fund.\ Math.\ \textbf{51} (1962), 195--228.

\bibitem{halbeisen-loewe}
  L.\ Halbeisen and B.\ L\"owe,
  \textit{Ultrafilter spaces on the semilattice of partitions}.
  Topology Appl.\ \textbf{115} (2001), 317--332.

\bibitem{jech}
  T.\ Jech,
  \textit{Set Theory}, the third millennium edition.
  Springer-Verlag, Berlin, 2003.

\bibitem{keisler}
  H.\,J.\ Keisler,
  \textit{A survey of ultraproducts}, in: Y.\ Bar-Hillel (ed.),
  Logic, Methodology and Philosophy of Science.
  North-Holland, Amsterdam, 1965, 112--126.

\bibitem{matet}
  P.\ Matet,
  \textit{Partitions and filters}.
  J.\ Symbolic Logic \textbf{51} (1986), 12--21.

\bibitem{pelant-reiterman}
  J.\ Pelant and J.\ Reiterman,
  \textit{Atoms in uniformities}.
  Seminar Uniform Spaces 1973/74, Math.\ Inst.\ Czechoslovak Acad.\
  Sci., Prague, 1975, 73--81.

\bibitem{prrs1}
  J.\ Pelant, J.\ Reiterman, V.\ R\"odl and P.\ Simon,
  \textit{Ultrafilters on $\omega$ and atoms in the lattice of
  uniformities I}.
  Topology Appl.\ \textbf{30} (1988), 1--17.

\bibitem{prrs2}
  J.\ Pelant, J.\ Reiterman, V.\ R\"odl and P.\ Simon,
  \textit{Ultrafilters on $\omega$ and atoms in the lattice of
  uniformities II}.
  Topology Appl.\ \textbf{30} (1988), 107--125.

\bibitem{walker}
  R.\ C.\ Walker,
  \textit{The Stone--\v{C}ech Compactification}.
  Springer-Verlag, Berlin, 1974.
\end{thebibliography}
\end{document}